\newtheorem{thm}{Theorem}
\newtheorem{corollary}[thm]{Corollary}
\newtheorem{proposition}[thm]{Proposition}
\newtheorem{lemma}[thm]{Lemma}
\newtheorem{conjecture}{Conjecture}
\newdefinition{definition}{Definition}
\newtheorem{remark}{Remark}
\newtheorem{example}[thm]{Example}
\newcommand{\R}{\mathbb{R}}      
\newcommand{\K}{\mathbb{K}}      
\newcommand{\bS}{\mathbb{S}}      
\newcommand{\bK}{\mathbb{K}}
\newcommand{\ku}{\mathbf{u}}    
\newcommand{\kw}{\mathbf{w}}
\newcommand{\xmark}{\ding{55}}
\DeclareMathOperator{\Id}{\mathbb{I}}
\DeclarePairedDelimiterX{\norm}[1]{\lVert}{\rVert}{#1}  
\newcommand{\symn}[1]{\operatorname{sym}(#1)}
\newcommand{\skewn}[1]{\operatorname{skew}(#1)}
\DeclareMathOperator{\vect}{vec}
\DeclareMathOperator{\matt}{Mat}
\DeclareMathOperator{\svect}{s2vec}
\DeclarePairedDelimiter\floor{\lfloor}{\rfloor}
\newcommand\svec[1]{\svect (#1)}
\newcommand\ovec[1]{\vect (#1)}
\newcommand{\trace}{\operatorname{tr}}     
\DeclareMathOperator{\rank}{rank}
\newcommand{\Diag}{\operatorname{Diag}}     
\begin{document}
\begin{frontmatter}
\title{On the spectral structure of Jordan-Kronecker products of symmetric and skew-symmetric matrices\tnoteref{t1}}


\author[uw]{Nargiz Kalantarova\corref{cor1}}
\ead{n2kalant@uwaterloo.ca}
\author[uw]{Levent Tun\c{c}el}
\ead{ltuncel@uwaterloo.ca}
 
\cortext[cor1]{Corresponding author} 
\tnotetext[t1]{Some of the material in this thesis appeared in a preliminary form in Kalantarova's Ph.D. thesis \cite{thesis:nkalantarova}}
\address[uw]{Department of Combinatorics and Optimization, Faculty of Mathematics, University of Waterloo, 200 University Ave. W., Waterloo, ON, N2L 3G1, Canada.}

\begin{abstract}
Motivated by the conjectures formulated in 2003 \cite{TuncelWolk2005}, we study interlacing properties of the eigenvalues of $A\otimes B + B\otimes A$ for pairs of $n$-by-$n$ matrices $A, B$. We prove that for every pair of symmetric matrices (and skew-symmetric matrices) with one of them at most rank two, the \emph{odd spectrum} (those eigenvalues determined by skew-symmetric eigenvectors) of $A\otimes B + B\otimes A$ interlaces its \emph{even spectrum} (those eigenvalues determined by symmetric eigenvectors). Using this result, we also show that when $n \leq 3$, the odd spectrum of $A\otimes B + B\otimes A$ interlaces its even spectrum for every pair $A, B$. The interlacing results also specify the structure of the eigenvectors corresponding to the extreme eigenvalues. In addition, we identify where the conjecture(s) and some interlacing properties hold for a number of structured matrices. We settle the conjectures of \cite{TuncelWolk2005} and show they fail for some pairs of symmetric matrices $A, B$, when $n\geq 4$ and the ranks of $A$ and $B$ are at least $3$. 

\end{abstract}


\begin{keyword}
Kronecker product, symmetric Kronecker product, interlacing, eigenvalues, Jordan-Kronecker product.
\end{keyword}

\end{frontmatter}

\section{Introduction}
Kronecker products and interlacing are commonly used in matrix theory, spectral graph theory and in their applications. 
This work investigates interlacing properties of the eigenvalues of $A\otimes B + B\otimes A$ (for which the eigenvalues partition into two classes with respect to the structure of the eigenvectors they correspond to). These interlacing properties also specify the structure of the eigenvectors corresponding to the extreme eigenvalues which is crucial for determining the condition number of normal matrices and plays an important role in sensitivity analysis.

Given real numbers $a_1\geq a_2\geq \cdots \geq a_n$ and $b_1\geq b_2\geq \cdots \geq b_m$ where $m<n$, the sequence $\{b_i\}_{i=1}^m$ is said to \emph{interlace} $\{a_i\}_{i=1}^n$, if for every $i\in\{1,2,\ldots, m\}$,
\[
a_{i}\geq b_{i}\geq a_{i+n-m}.
\]
Interlacing has been studied extensively in the literature \cite{ThompsonFreede:1971, Thompson:1976}. It has many applications in matrix theory, perturbation theory, real stable polynomials and spectral graph theory \cite{Cantoni1976, Delsarte1984, Trench:1993, book:RahmanSchmeisser, DWagner, Haemers:95, Haemers:2014, Spielman:2013}, among others.

The {\it Kronecker product} of real matrices $A \in \R^{m\times n}$ and $B \in \R^{p \times q}$ is the $mp$-by-$nq$ matrix which is defined by
\begin{equation*}
A\otimes B := 
\begin{bmatrix} 
a_{11} B & a_{12} B & \cdots & a_{1n}B\\
a_{21} B & a_{22} B & \cdots & a_{2n}B\\
\vdots & \vdots & \ddots & \vdots \\
a_{m1} B & a_{m2} B & \cdots & a_{mn}B\\
\end{bmatrix}.
\end{equation*}

The Kronecker product of two matrices represents the tensor product of two special linear maps. It arises in signal processing, semidefinite programming, and quantum computing, see \cite{Pitsianis:1997, VanLoan2000} and the references therein. They have also been used extensively in the theory and applications of linear matrix equations such as Sylvester equations and algebraic Lyapunov equations \cite{VanLoan2000, Mehrmann:2000}, in some compressed sensing applications \cite{Vanderbei:2016} and in strengthening the semidefinite relaxations of second order cone constraints \cite{Anstreicher2017}. Low rank tensor approximations arising in the context of certain quantum chemistry systems \cite{VanLoanVokt2015}, linear systems and eigenvalue problems \cite{Kressner:2016} 
are among many other applications that utilize the rich properties of tensor products.

We adopt the following terminology. For a matrix $M\in\R^{m\times n}$, we denote the \emph{Frobenius norm} of $X$ by
$\norm{M}_F :=\left(\trace \left(M^\top M \right)\right)^{1/2}$, where $(\cdot)^\top$ represents the transpose and $\trace(\cdot)$ is the usual trace on square matrices. We denote the sets of {\it $n$-by-$n$ real symmetric} and \emph{$n$-by-$n$ real skew-symmetric matrices} by $\bS^n$ and $\bK^n$, respectively.
The dimensions of $\bS^n$ and $\bK^n$ are denoted by $\symn{n}:=n(n+1)/2$ and $\skewn{n}:=n(n-1)/2$, respectively.
If $X\in \R^{m\times n}$, $\ovec{X}$ is an $mn$-dimensional vector formed by stacking the columns of $X$ consecutively. The inverse of the $\ovec{\cdot}$ operation is called $\matt (\cdot)$, which matricizes the $mn$-dimensional vector to an $m$-by-$n$ matrix.
For matrices $A, B$ and $X$ of appropriate dimensions,
\begin{equation}
\label{ident:kronvec}
\small \ovec{AXB} = (B^\top\otimes A) \ovec{X}.
\end{equation}

\begin{definition}[Symmetric/skew-symmetric vector]
\label{defn:sym-skew-vec}
Let $P \in \R^{n^2\times n^2}$ be an involutory, symmetric matrix. We call $x \in \R^{n^2}$ a {\it symmetric vector} if $Px = x$, a {\it skew-symmetric vector} if $Px = -x$. 
\end{definition}

\noindent The \emph{commutation matrix} $T$ \cite{Neudecker:1979} takes the ``transpose" of $n^2$-dimensional vectors. This matrix is involutory and symmetric, i.e., it satisfies $T^2 = \Id$ and $T^\top = T$. Moreover, for every pair of $n$-by-$n$ matrices $A$ and $B$, $T(A\otimes B)T = B\otimes A$.

The \emph{symmetric Kronecker product} of $A, B \in \R^{n\times n}$ is defined as
\begin{equation}
\label{defn:symKron}
(A\overset{s}{\otimes} B) := \frac{1}{2}Q^\top(A\otimes B + B\otimes A)Q = Q^\top(A\otimes B)Q.
\end{equation}
See, for instance, \cite{TuncelWolk2005} for an alternative definition.
The {\it skew-symmetric Kronecker product} of $A, B\in \R^{n\times n}$ is defined as
\[
A\overset{\tilde{s}}{\otimes} B:= \frac{1}{2}\tilde{Q}^\top \left(A\otimes B+ B\otimes A\right)\tilde{Q} = \tilde{Q}^\top \left(A\otimes B\right)\tilde{Q}.
\]
Here, $Q\in \R^{n^2 \times \symn{n}}$ and $\tilde{Q}\in \R^{n^2 \times \skewn{n}}$ are such that their columns form an orthonormal basis for $n^2$-by-$1$ symmetric and skew-symmetric vectors, respectively. For explicit definitions of these matrices we refer the reader to \cite{thesis:nkalantarova}.
Note that $Q^\top Q = \Id$ and $\tilde{Q}^\top\tilde{Q} = \Id$. Both $Q Q^\top$ and $\tilde{Q} \tilde{Q}^\top$ are orthogonal projectors. The former is mapping every point in $\R^{n^2}$ onto the set of symmetric vectors in $\R^{n^2}$ \cite{TuncelWolk2005}, and the latter to the set of skew-symmetric vectors in $\R^{n^2}$. 

The spectral structure of $A\otimes B$ is well known \cite{book:HornJohnson}. 
However, the spectral structure of $A\otimes B + B\otimes A$, its relation to the eigenvalue/eigenvectors of the symmetric Kronecker product as well as the skew-symmetric Kronecker product have not been developed fully.

\begin{definition}[Jordan-Kronecker Product]
Let $A$, $B$ be $n$-by-$n$ real matrices. The \emph{Jordan-Kronecker product} of $A$ and $B$ is defined as
\[
A\otimes B + B\otimes A.
\]
\end{definition}
Indeed, this is the Jordan product of $A$ and $B$ \cite{book:Eves_matrix, Faraut:1994}
where the matrix multiplication is replaced by the Kronecker product. This is also related to the notion of \emph{Jordan triple product}, since
\[
(A\otimes B + B\otimes A)\ovec{X} = \ovec{AXB} + \ovec{BXA}, \;\;\; \forall A, B \in \bS^n.
\]

The Jordan-Kronecker product is a special form of a generalized continuous-time Lyapunov operator that has been studied extensively in the literature, see \cite{Stykel:2002} and the references therein.
A nice characterization for the eigenstructure of the Jordan-Kronecker product of $n\text{-by-}n$ matrices $A$ and $B$ is provided in \cite[Section 2]{TuncelWolk2005} which shows that the eigenvectors of 
$A\otimes B + B\otimes A$ can be chosen so that they form an orthonormal basis where each eigenvector is either a symmetric or a skew-symmetric vector, where $P$ in Definition~\ref{defn:sym-skew-vec} is the commutation matrix. 
In fact, one can observe that if the commutation matrix $T$ in
\[
A\otimes B + B\otimes A = A\otimes B + T(A\otimes B)T
\]
is replaced by an arbitrary symmetric, involutory {matrix\protect\footnote{We thank Chris Godsil for suggesting this generalization.}} then the above matrix has an eigenvector decomposition such that eigenvectors form an orthonormal basis where each is either a symmetric or a skew-symmetric vector.

The outline of our paper is as follows. In section 2, we review related literature on structured matrices with similar eigenvalue/eigenvector structure and interlacing properties. We contrast our work with the existing literature and summarize our main results in Table~\ref{table:summary}. We generalize the eigenstructure of the Jordan-Kronecker product. Then, in Section 2.1 we present affirmative results where interlacing properties hold and conjecture that the eigenvector corresponding to the smallest eigenvalue of the Jordan-Kronecker product of positive definite matrices is symmetric. In Section 2.2, we give a counterexample and settle the conjectures posed in \cite{TuncelWolk2005}. Also, we provide a family of matrices where the conjectures fail. In Section 3, we  discuss the eigenvalue/eigenvector structure of Lie-Kronecker products and conclude our paper.
\section{Main Results}
In \cite{TuncelWolk2005}, Tun\c{c}el and Wolkowicz conjectured interesting interlacing relations (see \cite[Conjecture 2.10-2.12]{TuncelWolk2005}) on the eigenvalues of the Jordan-Kronecker products of real symmetric matrices. These conjectures not only propose an ordering relation between certain eigenvalues of the Jordan-Kronecker products but also a symmetry structure for the eigenvectors corresponding to the extreme eigenvalues. In this paper, we investigate interlacing properties in general as well as the conjectures stated in \cite{TuncelWolk2005}. 

The characterization of structured matrices by a symmetry property of their eigenvectors is not a new concept. There are various structured matrices other than Jordan-Kronecker products such that they have an eigendecomposition where their eigenvector is either symmetric (if $Px=x$) or skew-symmetric (if $Px=-x$), where $P$ is a special involutory matrix. Here, the eigenvalue corresponding to a symmetric eigenvector is called \emph{even} and the one corresponding to a skew-symmetric eigenvector is called \emph{odd}. This terminology was also used for centrosymmetric matrices \cite{Andrew:1973} in which $P$ is the matrix with ones on the secondary diagonal (from the top right corner to the bottom left corner) and zeros elsewhere and also has been used with a general involutory matrix in \cite{Yasuda:2003}. Utilization of this structure leads to efficient solution for the eigenvalue problem (with $25$\% savings in computation time) and helps with exposing certain properties of the solution that are also shared by the approximate solution found by some numerical methods to solve certain Sturm-Liouville problems \cite{Andrew:1973}. 

Some sufficient conditions on the interlacing of odd and even eigenvalues are provided in \cite[Theorem 5, 6]{Cantoni1976} for centrosymmetric matrices, and in \cite{Trench:1993} for some real symmetric Toeplitz matrices. The interlacing property, alone, is algebraically interesting itself but it also plays an important role in answering the inverse eigenvalue problem \cite{Trench:1993, Landau:1994} for real symmetric Toeplitz matrices.
Another example of a set of matrices with such structured eigenvectors are \emph{perfect shuffle symmetric matrices} (e.g. matrices $A\in \bS^{n^2}$ such that 
$TAT = A$, see for instance \cite{Dangeli:2017}). Perfect shuffle matrices also arise when unfolding certain order 4-tensors in certain quantum chemistry applications \cite{VanLoanVokt2015}. We note that the Jordan-Kronecker product is a symmetric perfect shuffle matrix.

Such structures also appear in the singular vectors of certain Lyapunov operators. The sensitivity of the solution of the Lyapunov equations depends on the smallest singular values. When the smallest singular vector is symmetric this is advantageous for certain numerical algorithms \cite{ByersNash:1987}. Therefore, it is valuable to know whether these extreme singular vectors are symmetric or skew-symmetric. Some studies on the singular vector(s) corresponding to the smallest and largest singular values of a Lyapunov operator can be found in \cite{ByersNash:1987, ChenTian:2015}. For the \emph{generalized} continuous-time and discrete-time Lyapunov operators, the symmetry structure of the smallest and largest singular vectors of has been addressed in \cite{ChenTian:2016}. 

Let $s$ and $t$ denote the dimensions of the sets $\{x \in \R^{n^2}: Px = x \}$  and $\{x \in \R^{n^2}: Px = -x \}$, respectively, where $P$ is an $n^2$-by-$n^2$ symmetric, involutory matrix.  
Furthermore, let $\Theta\in \R^{n^2\times s}$ be a matrix such that $P\Theta = \Theta$ and let $\tilde{\Theta}\in \R^{n^2 \times t}$ be such that $P\tilde{\Theta}=-\tilde{\Theta}$. Then, the eigenvalue/eigenvector structure of the \emph{generalized Jordan-Kronecker product}, $ A\otimes B + P(A\otimes B)P$, is similar to that of Jordan-Kronecker product. The following is a generalization of Theorem 2.9 in \cite{TuncelWolk2005} that characterizes the eigenvector/eigenvalue structure of the generalized Jordan-Kronecker product.
\begin{proposition}
\label{thm:genJordan_eigenstructure}
Let $A, B\in \bS^n$ (or $\K^n$) and $P\in \R^{n^2\times n^2}$ be a symmetric, involutory matrix. Then, $A\otimes B + P(A\otimes B)P$ has an eigendecomposition such that the eigenvectors decompose into symmetric and skew-symmetric vectors. Furthermore, the eigenvalues corresponding to the symmetric eigenvectors are the eigenvalues of $2\Theta^\top(A\otimes B)\Theta$ and the ones corresponding to the skew-symmetric eigenvectors are the eigenvalues of $2\tilde{\Theta}^\top(A\otimes B)\tilde{\Theta}$, where $\Theta\in \R^{n^2\times s}$ and $\tilde{\Theta}\in \R^{n^2 \times t}$ such that $P\Theta = \Theta$ and $P\tilde{\Theta}=-\tilde{\Theta}$.
\end{proposition}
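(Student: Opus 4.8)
The plan is to exhibit $M := A\otimes B + P(A\otimes B)P$ as a real symmetric matrix that commutes with $P$, and then to exploit the simultaneous orthogonal diagonalizability of commuting real symmetric matrices. First I would verify that $M$ is symmetric in both of the stated cases. Since $(A\otimes B)^\top = A^\top\otimes B^\top$ and $P^\top = P$, we get $M^\top = (A^\top\otimes B^\top) + P(A^\top\otimes B^\top)P$; when $A,B\in\bS^n$ this equals $M$ directly, and when $A,B\in\K^n$ the two sign changes cancel, since $A^\top\otimes B^\top = (-A)\otimes(-B) = A\otimes B$, so $M^\top=M$ in either case. Next, using $P^2=\Id$, a one-line computation gives $PM = P(A\otimes B)+(A\otimes B)P = MP$, so $M$ and $P$ commute.

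With these two facts in hand, the main step is standard: commuting real symmetric matrices admit a common orthonormal eigenbasis. Since the eigenvalues of the involution $P$ are $\pm 1$, each common eigenvector $x$ satisfies either $Px=x$ or $Px=-x$, i.e.\ is symmetric or skew-symmetric. This yields the claimed eigendecomposition of $M$ and settles the first assertion.

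For the eigenvalue identities, I would take the columns of $\Theta$ and $\tilde\Theta$ to be orthonormal bases of the $+1$ and $-1$ eigenspaces of $P$ (consistent with the conventions $Q^\top Q = \Id$, $\tilde Q^\top\tilde Q = \Id$), so that $\Theta\Theta^\top$ and $\tilde\Theta\tilde\Theta^\top$ are the orthogonal projectors onto the symmetric and skew-symmetric subspaces. Because $M$ commutes with $P$, it preserves each of these two subspaces, hence is block diagonal with respect to the orthogonal splitting $\R^{n^2} = \{x:Px=x\}\oplus\{x:Px=-x\}$; the spectrum of $M$ is therefore the union of the spectra of the two blocks $\Theta^\top M\Theta$ and $\tilde\Theta^\top M\tilde\Theta$, and the symmetric (resp.\ skew-symmetric) eigenvalues are precisely the eigenvalues of the first (resp.\ second) block. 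To finish, I would simplify these blocks using $P\Theta=\Theta$ and $P\tilde\Theta=-\tilde\Theta$, which give $\Theta^\top P = \Theta^\top$ and $\tilde\Theta^\top P = -\tilde\Theta^\top$; substituting into $M$ collapses the two summands into one and produces the factor of two, namely $\Theta^\top M\Theta = 2\,\Theta^\top(A\otimes B)\Theta$ and $\tilde\Theta^\top M\tilde\Theta = 2\,\tilde\Theta^\top(A\otimes B)\tilde\Theta$.

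There is no deep obstacle here: the argument is essentially the general principle that a symmetric operator commuting with a symmetric involution block-diagonalizes along the involution's eigenspaces. The only points requiring care are the sign bookkeeping in the skew-symmetric case (ensuring $M^\top=M$, and that the two sign flips in $\tilde\Theta^\top P(A\otimes B)P\tilde\Theta$ cancel to leave $+\,\tilde\Theta^\top(A\otimes B)\tilde\Theta$), and making explicit that $\Theta,\tilde\Theta$ are taken with orthonormal columns so that $\Theta^\top M\Theta$ and $\tilde\Theta^\top M\tilde\Theta$ genuinely represent the restricted blocks and thus carry the relevant eigenvalues of $M$.
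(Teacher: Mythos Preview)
Your proof is correct and follows essentially the same route as the paper: the paper directly verifies that $\Theta'^\top C\Theta'$ (with $\Theta'=[\Theta\ \tilde\Theta]$) is block diagonal via the identity $PCP=C$, which is exactly your commutation $PM=MP$ rephrased, and then reads off the eigenstructure from the spectral decompositions of the two blocks. Your framing in terms of simultaneous diagonalization of commuting symmetric matrices is a slightly more abstract packaging of the same computation, but the underlying argument and the handling of the factor of~$2$ are identical.
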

\begin{proof}
Let $C:=A\otimes B + P(A\otimes B)P$. Also, let
$\Theta':= \begin{bmatrix} \Theta & \tilde{\Theta} \end{bmatrix}$, where $\Theta$ is the $n^2$-by-$s$ matrix such that $P\Theta = \Theta$ and $\tilde{\Theta}$ is the $n^2$-by-$t$ matrix such that $P\tilde{\Theta}=-\tilde{\Theta}$. Then  
\begin{align*}
{\Theta'}^\top C\Theta' = 
\begin{bmatrix} 
\Theta^\top C \Theta & \Theta^\top C \tilde{\Theta}\\
\tilde{\Theta}^\top C \Theta & \tilde{\Theta}^\top C \tilde{\Theta}
\end{bmatrix}
= 
\begin{bmatrix} 
\Theta^\top C \Theta & 0\\
0 & \tilde{\Theta}^\top C \tilde{\Theta}
\end{bmatrix}.
\end{align*}
The off-diagonal blocks are zero since 
 $\Theta^\top C \tilde{\Theta} = \Theta^\top P C P \tilde{\Theta} =  -\Theta^\top C \tilde{\Theta}$.
Note that $\Theta^\top(A{\otimes} B)\Theta$ and $\tilde{\Theta}^\top(A{\otimes} B)\tilde{\Theta}$ are symmetric.
Let $\Theta^\top(A\otimes B)\Theta = U{\Lambda}_{e}U^\top$ and
$\tilde{\Theta}^\top(A{\otimes} B)\tilde{\Theta} = V {\Lambda}_{o}V^\top$
be the spectral decomposition of $\Theta^\top(A\otimes B)\Theta$ and $\tilde{\Theta}^\top(A{\otimes} B)\tilde{\Theta}$, respectively.
Then,
\begin{align*}
 C &= \begin{bmatrix} \Theta U & \tilde{\Theta} V \end{bmatrix} \begin{bmatrix} 2{\Lambda}_{e} & 0\\ 0 & 2{\Lambda}_{o} \end{bmatrix}
\begin{bmatrix} \Theta U & \tilde{\Theta} V \end{bmatrix}^\top.
\end{align*}
By the definition of $\Theta$ and $\tilde{\Theta}$, the columns of $\Theta U$ are symmetric and the columns of $\Theta V$ are skew-symmetric. Therefore, the even spectrum of $C$ consists of the eigenvalues of $2(\Theta^\top(A\otimes B)\Theta)$ and the odd spectrum of $C$ consists of the eigenvalues of 
$2(\tilde{\Theta}^\top(A{\otimes} B)\tilde{\Theta})$.
\end{proof}
We define some interlacing properties which will be referred throughout the paper in order to avoid repetition of the conjectures provided in \cite{TuncelWolk2005}.
\begin{definition}
Let $A, B \in \bS^n$ (or $\bK^n$) and let  $\lambda_1\geq \cdots \geq \lambda_s$ and $\beta_1\geq \cdots \geq \beta_t$ denote the even and the odd eigenvalues of $C:=A\otimes B+ B\otimes A$, respectively, in non-increasing order, where $s:=(n+1)n/2$ and $t:=(n-1)n/2$. 
\begin{itemize}
\item \textbf{Interlacing Property}: The \emph{odd eigenvalues of $C$ interlace its even eigenvalues} if for an eigenvalue $\beta_i$ in the odd spectrum of $C$, there are eigenvalues, $\lambda_{s-t+i}, \lambda_i$ in the even spectrum of $C$ such that
\[
\lambda_i \geq \beta_i \geq \lambda_{s-t+i}, \;\; \forall i\in\{1,\ldots, t\}.
\]
\item \textbf{Weak Interlacing Property}:
$A, B$ satisfies \emph{weak interlacing}, if
\begin{equation}
\label{conj1:eq1}
\lambda_s :=\min_{Tu = u}\dfrac{u^\top (A \otimes B) u}{u^\top u} \leq \min_{Tw = -w}\dfrac{w^\top (A \otimes B) w}{w^\top w}=: \beta_t,
\end{equation}
\begin{equation}
\label{conj1:eq2}
\beta_1:=\max_{Tw = -w}\dfrac{w^\top (A \otimes B) w}{w^\top w} \leq \max_{Tu = u}\dfrac{u^\top (A \otimes B) u}{u^\top u}=:\lambda_1
\end{equation}
or equivalently,
\begin{equation}
\label{conj1_2:eq1}
\min_{U\in \bS^n, \norm{U}_F=1}\trace (A UBU) \leq \min_{W\in \bK^n, \norm{W}_F=1}\trace \left(A WBW^\top\right), \end{equation}
\begin{equation}
\label{conj1_2:eq2}
\max_{U\in \bS^n, \norm{U}_F=1}\trace (A UBU) \geq \max_{W\in \bK^n, \norm{W}_F=1}\trace \left(A WBW^\top\right).
\end{equation}
\item \textbf{Strong Interlacing Property}:
$A, B$ satisfies \emph{strong interlacing} if for each $k$th eigenvalue of $C$ that is odd, where $k\in\{2,\ldots, n-1\}$, the $(k+1)$th and $(k-1)$th eigenvalues of $C$ are associated with symmetric eigenvectors.
\end{itemize}
\end{definition}
Conjecture 2.10 (or equivalently 2.11) in \cite{TuncelWolk2005} claims that for all $A, B\in \bS^n$ the weak interlacing property holds. This is equivalent to saying that the eigenvectors of the Jordan-Kronecker product corresponding to the smallest and largest eigenvalues are symmetric.
Conjecture 2.12 in \cite{TuncelWolk2005} claims that for all $A, B\in \bS^n$ the strong interlacing property holds. 
It is worth to note that the interlacing property implies the weak interlacing property. Furthermore, the strong interlacing property is a special case of the interlacing property.

The ``extreme" singular vectors of such expressions have been studied in various contexts as they are related to the numerical stability of methods for solving the algebraic Lyapunov equation. In \cite{ByersNash:1987}, Byers and Nash tried to show that the 
largest singular value of a Lyapunov operator is achieved by a symmetric singular vector.
However, as pointed out in \cite{ChenTian:2015}, the paper contains an error. The proof for $n\leq 5$ has been presented by Chen and Tian in \cite{ChenTian:2015}, whereas the case $n \geq 6$ is still open. The proof of this conjecture on some structured matrices such as non-negative, non-positive, and tridiagonal matrices can be found in \cite{Feng:2015}.
Byers and Nash also conjectured that the singular vector corresponding to the smallest singular value is symmetric \cite{ByersNash:1987} and showed it for $n=2$.
Chen and Tian studied the largest and smallest singular vectors of the \emph{generalized Lyapunov operators}; they showed that the largest singular value is achieved by a symmetric vector for $n\leq 3$, but the smallest singular value is achieved by a symmetric vector for only $n\leq 2$ \cite{ChenTian:2016}.

Although the Jordan-Kronecker product is a special case of the generalized Lyapunov operator, our results as well as our techniques differ from \cite{ChenTian:2016} in many ways. We study the structure of its eigenvectors (not the singular vectors corresponding to the extreme singular values). We show that for $n \leq 3$ the eigenvectors corresponding to the largest as well as the smallest eigenvalues of Jordan-Kronecker product are symmetric. Note that in \cite{ChenTian:2016}, the smallest singular value is achieved by a symmetric vector for $n = 2$, and a counterexample is provided for $n = 3$. 
Differently from \cite{ChenTian:2016}, 
our focus is on the interlacing properties of the eigenvalues corresponding to the symmetric and skew-symmetric eigenvectors of the Jordan-Kronecker product. For interlacing properties, our main results do not only depend on the dimension, $n$, of the matrices but also their rank. See Table~\ref{table:summary} for a summary.

\begin{table}[ht]
\begin{tabular}[l]{l || p{2cm} | p{1.8cm} |p{2cm} }
  \hline			
  \parbox[t]{1.6cm}{ \hspace{1cm}
  \\ ${A, B \in \bS^n \: (\text{or in }\bK^n}$) \\ \hspace{1cm}}& \textbf{\parbox[t]{2cm}{Weak 
  \\Interlacing \\Property}} & \textbf{Interlacing Property} & \textbf{\parbox[t]{2cm}{Strong 
  \\Interlacing \\Property}} \\
  \hline\hline
   $n=2$ & $\checkmark$ \: Cor.~\ref{cor:n2case} & $\checkmark$ \: Cor.~\ref{cor:n2case} & $\checkmark$ Cor.~\ref{cor:n2case}\\
   \hline
   $n=3$ & $\checkmark$ \: Prop.~\ref{prop:n3case} & $\checkmark$ \: Prop.~\ref{prop:n3case} & \xmark \\
   \hline
   \parbox[t]{4.7cm}{$n$ is arbitrary,\\ $\min\{\rank(A), \rank(B)\} \leq 2$} & $\checkmark$ \: Thm.~\ref{thm:rank2case} & $\checkmark$ \: Thm.~\ref{thm:rank2case} & \xmark \\
   \hline
  \parbox[t]{4.7cm}{$n\geq 4$, $\rank(A)=\rank(B)=3$} & \xmark  \: Ex.~\ref{ex:rank33} & \xmark \: Ex.~\ref{ex:rank33} & \xmark \: Ex.~\ref{ex:rank33} \\
  \hline  
  \parbox[t]{4.7cm}{ $k\geq 3$, $n\geq \max\{4,k\}$,\\ $\min\{\rank(A), \rank(B)\} \geq k$} & \xmark \: Prop.~\ref{prop:counterex} & \xmark \: Prop.~\ref{prop:counterex} & \xmark  \: Prop.~\ref{prop:counterex}\\
 \hline
\end{tabular}
\caption{A summary of interlacing properties}
\label{table:summary}
\end{table}

\begin{remark} 
\label{rmk:Bdiag}
Let $A, B \in \bS^n$. Regarding the interlacing properties, without loss of generality, we may assume $B$ is a diagonal matrix with diagonal entries sorted in descending order.
\end{remark}
\begin{proof} Let $A, B \in \bS^n$ be given, and let $B := VDV^\top$ be the spectral decomposition of $B$, where $D$ is the diagonal matrix whose diagonal entries are the eigenvalues of $B$ sorted in descending order. Let $X\in \bS^n$ be such that $\norm{X}_F = 1$.
Define $U:= V^\top X V$ and $\bar{A}:= V^\top A V$. Then
 $U^\top = U$ and $\norm{U}_F^2 = 1$.  Using the commutativity of the trace operator and the orthogonality of $V$, we get
\begin{align*}
\trace \left( AXBX \right) &= \trace \left( \left( V^\top A V\right) \left(V^\top X V\right) \left(V^\top B V\right) \left(V^\top X V\right) \right) = \trace \left( \bar{A}U {D} U\right).
\end{align*}
The proof for $X\in \bK^n$ is similar and is omitted. 
\end{proof}

\subsection{Cases where interlacing properties hold}
We start with some algebraic results related to the interlacing properties. Then, we list some structured matrices for which interlacing properties hold.

\begin{lemma}
\label{lem:eig1}
Let $A\in \bS^n$. Then
\begin{equation*}
\max_{U\in \bS^n, \norm{U}_F=1} \trace(UAU) = \lambda_1 (A)
\hspace{0.2cm} \text{ and } \hspace{0.2cm}
\min_{U\in \bS^n, \norm{U}_F=1} \trace(UAU) =  \lambda_n (A).
\end{equation*}
\end{lemma}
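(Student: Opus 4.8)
The plan is to reduce the quadratic form $\trace(UAU)$ to a convex combination of the eigenvalues of $A$, exploiting the orthogonal invariance of the problem. First I would diagonalize $A = Q\Lambda Q^\top$ with $\Lambda = \Diag(\lambda_1(A),\ldots,\lambda_n(A))$ and $Q$ orthogonal, and substitute $V := Q^\top U Q$. Since $Q$ is orthogonal, $V$ ranges over all of $\bS^n$ as $U$ does, and $\norm{V}_F = \norm{U}_F = 1$; moreover $\trace(UAU) = \trace(V\Lambda V)$. Hence, exactly as in Remark~\ref{rmk:Bdiag}, there is no loss in assuming $A = \Lambda$ is diagonal from the outset.

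Next I would expand the objective. Using $U^\top = U$ and the cyclic property of the trace, $\trace(UAU) = \trace(AU^2)$, and with $A$ diagonal this gives $\trace(AU^2) = \sum_{i=1}^n \lambda_i(A)\,(U^2)_{ii} = \sum_{i=1}^n \lambda_i(A) \sum_{j=1}^n U_{ij}^2$. Setting $w_i := \sum_{j=1}^n U_{ij}^2 \ge 0$, the constraint reads $\sum_{i=1}^n w_i = \sum_{i,j} U_{ij}^2 = \norm{U}_F^2 = 1$, so $(w_1,\ldots,w_n)$ is a probability vector and $\trace(UAU) = \sum_i \lambda_i(A)\, w_i$ is a convex combination of the eigenvalues of $A$. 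Consequently $\lambda_n(A) \le \trace(UAU) \le \lambda_1(A)$ for every feasible $U$, which delivers both inequalities at once.

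Finally I would verify attainment, which is the only step requiring a concrete construction: taking $U = q_1 q_1^\top$, the rank-one orthogonal projector onto a unit top eigenvector $q_1$ of $A$, yields $U \in \bS^n$ with $\norm{U}_F = 1$ and $\trace(UAU) = q_1^\top A q_1 = \lambda_1(A)$, while $U = q_n q_n^\top$ realizes $\lambda_n(A)$. I expect the computation to be routine; the only point that needs genuine care is that no off-diagonal contribution is lost in passing from $\trace(UAU)$ to $\sum_i \lambda_i(A)\,(U^2)_{ii}$, which holds precisely because $U$ is symmetric, so $(U^2)_{ii} = \sum_j U_{ij} U_{ji} = \sum_j U_{ij}^2$. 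Thus the main (mild) obstacle is bookkeeping: confirming that the rank-one projectors are feasible and that the weights $w_i$ form a legitimate probability vector, both of which are immediate once symmetry of $U$ is used.
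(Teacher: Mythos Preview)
Your argument is correct. It differs from the paper's proof in the mechanism used to obtain the upper bound $\trace(UAU)\le\lambda_1(A)$. The paper vectorizes: using $\trace(UAU)=\ovec{U}^\top(I\otimes A)\,\ovec{U}$, it views the objective as the Rayleigh quotient of $I\otimes A$ restricted to symmetric vectors and invokes $\lambda_1(I\otimes A)=\lambda_1(A)$. You instead diagonalize $A$, rewrite $\trace(UAU)=\trace(AU^2)=\sum_i\lambda_i(A)\sum_jU_{ij}^2$, and recognize this as a convex combination of the eigenvalues. Both proofs then use the same rank-one projector $U=v_1v_1^\top$ for attainment. Your route is slightly more self-contained (no Kronecker-product identity is needed) and yields both the maximum and minimum bounds in a single stroke; the paper's route ties the lemma directly into the $I\otimes A$ framework that pervades the rest of the article.
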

\begin{proof}
We have
\begin{align*}
\max_{\substack{U\in \bS^n,\\ \norm{U}_F=1}} \trace(UAU) = \max_{U\in\bS^n} \dfrac{ \vect (U) ^\top \left (I\otimes A\right)\vect (U)}{\vect (U)^\top \vect (U)}
 \leq \lambda_1 (I\otimes A) 
 = \lambda_1 (A). 
\end{align*}
The first equality follows from $\trace(UAU) = \ovec{U^\top}^\top \ovec{AUI}$ and the identity \eqref{ident:kronvec}.
The inequality holds since the maximization of the Rayleigh quotient of $\left(I\otimes A\right)$ is carried over a subset of $\R^{n^2}$.
If $\hat{U}: = v_1 v_1^\top$, where $v_1$ is an eigenvector of $A$ corresponding to $\lambda_1(A)$, then 
$\trace (\hat{U}A\hat{U}) = \lambda_1(A)$. Since the upper bound of $\max_{{U\in \bS^n, \norm{U}_F=1}} \trace(UAU)$ is achieved with $\hat{U}$, where $\hat{U}\in \bS^n$ with $\norm{\hat{U}}_F= v_1^\top v_1=1$ the first result follows. The proof of the second result is similar and is omitted here.
\end{proof}

Next, we provide a useful result that sheds light on the interlacing relation between the odd and even eigenvalues of $A\otimes B + B\otimes A$.
\begin{proposition}
\label{prop:stronginterlace}
Let $A, B\in \bS^n$ (or $\K^n$). 
Then, the even and odd eigenvalues of $C:=A\otimes B + B\otimes A$ are equal to the eigenvalues of $2(A\overset{s}{\otimes} B)$ and $2(A\overset{\tilde{s}}{\otimes} B)$, respectively. Furthermore, if $G (A\overset{\tilde{s}}{\otimes} B)G^\top$ is a principal submatrix of  $(A\overset{s}{\otimes} B)$ for some $\skewn{n}$-by-$\skewn{n}$ orthogonal matrix $G$, then the odd eigenvalues of $C$ interlace its even eigenvalues.
\end{proposition}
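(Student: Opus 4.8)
The plan is to recognize that the displayed hypothesis has been engineered precisely so that the classical Cauchy interlacing theorem for principal submatrices applies. Since the first part of the proposition already identifies the even spectrum of $C$ with twice the spectrum of $S := A\overset{s}{\otimes} B \in \bS^{\symn{n}}$ and the odd spectrum of $C$ with twice the spectrum of $K := A\overset{\tilde{s}}{\otimes} B \in \bS^{\skewn{n}}$, I would work directly with $S$ and $K$ and only rescale by $2$ at the very end. Concretely, the eigenvalues of $S$ are $\tfrac{1}{2}\lambda_1 \geq \cdots \geq \tfrac{1}{2}\lambda_s$ and those of $K$ are $\tfrac{1}{2}\beta_1 \geq \cdots \geq \tfrac{1}{2}\beta_t$.

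First I would observe that an orthogonal congruence preserves eigenvalues: because $G$ is orthogonal, $GKG^\top = GKG^{-1}$ is similar to $K$, so $GKG^\top$ and $K$ share the spectrum $\tfrac{1}{2}\beta_1 \geq \cdots \geq \tfrac{1}{2}\beta_t$. Thus the hypothesis asserts that the symmetric matrix $S$ of order $\symn{n}$ has a principal submatrix (namely $GKG^\top$) whose eigenvalues are exactly $\tfrac{1}{2}\beta_1, \ldots, \tfrac{1}{2}\beta_t$. The key step is then a direct appeal to Cauchy's interlacing theorem. The principal submatrix has order $t = \skewn{n}$ while $S$ has order $s = \symn{n}$, so the number of deleted rows and columns is $s - t = \symn{n} - \skewn{n} = n$. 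Cauchy interlacing then yields
\[
\tfrac{1}{2}\lambda_i \;\geq\; \tfrac{1}{2}\beta_i \;\geq\; \tfrac{1}{2}\lambda_{i + (s-t)}, \qquad i = 1, \ldots, t,
\]
and multiplying through by $2$ while substituting $s - t = n$ gives $\lambda_{s-t+i} \leq \beta_i \leq \lambda_i$ for every $i \in \{1, \ldots, t\}$, which is precisely the Interlacing Property.

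There is no serious obstacle here; the mathematical content lies entirely in arranging the hypothesis so that Cauchy interlacing can be invoked, together with the bookkeeping of the index shift $s - t = n$. The single point requiring a moment's care is that interlacing must be applied to $GKG^\top$, the matrix that literally sits as a principal submatrix of $S$, and the conclusion then transported back to $K$ via the orthogonal-similarity invariance of the spectrum. The orthogonal matrix $G$ plays no role beyond preserving eigenvalues, and indeed the verification that such a $G$ exists (so that this clean hypothesis is actually met in the cases of interest) is where the substantive work of the later results will reside.
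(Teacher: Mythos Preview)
Your treatment of the ``Furthermore'' clause is correct and is exactly what the paper does: note that $G$ orthogonal makes $GKG^\top$ similar to $K$ (hence isospectral), and then invoke Cauchy's interlacing theorem on the principal submatrix $GKG^\top$ of $S$, reading off the index shift $s-t=n$.

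However, you have not actually proved the proposition: you take its first assertion---that the even and odd spectra of $C$ coincide with the spectra of $2(A\overset{s}{\otimes}B)$ and $2(A\overset{\tilde{s}}{\otimes}B)$---as already established, and build on it. That assertion is part of what is being claimed, and it is not purely definitional. From the definitions one has $2(A\overset{s}{\otimes}B)=Q^\top C Q$ and $2(A\overset{\tilde{s}}{\otimes}B)=\tilde{Q}^\top C \tilde{Q}$, but to conclude that these are precisely the even and odd eigenvalues of $C$ one must check that $C$ leaves the symmetric and skew-symmetric subspaces invariant, i.e., that the off-diagonal block $Q^\top C \tilde{Q}$ vanishes. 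The paper supplies this: with $P:=[\,Q\ \ \tilde{Q}\,]$ orthogonal, it computes
\[
Q^\top C \tilde{Q} \;=\; Q^\top T C T \tilde{Q} \;=\; -\,Q^\top C \tilde{Q},
\]
using $TCT=C$, $TQ=Q$, $T\tilde{Q}=-\tilde{Q}$, so the block is zero and $P^\top C P$ is block-diagonal with the two blocks $2(A\overset{s}{\otimes}B)$ and $2(A\overset{\tilde{s}}{\otimes}B)$. Spectral decomposition of each block then shows that $QU$ furnishes symmetric eigenvectors and $\tilde{Q}V$ skew-symmetric ones, which is what pins the two spectra to ``even'' and ``odd''. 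You should include this short argument (or at least cite the earlier general version in Proposition~\ref{thm:genJordan_eigenstructure}) before moving to Cauchy interlacing.
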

\begin{proof}
Let 
$P:= \begin{bmatrix} Q & \tilde{Q} \end{bmatrix}$. Then  
\begin{align*}
P^\top CP = 
\begin{bmatrix} 
Q^\top C Q & Q^\top C \tilde{Q}\\
\tilde{Q}^\top C Q & \tilde{Q}^\top C \tilde{Q}
\end{bmatrix}
= 
\begin{bmatrix} 
Q^\top C Q & 0\\
0 & \tilde{Q}^\top C \tilde{Q}
\end{bmatrix} 
= \begin{bmatrix} 
2 (A\overset{s}{\otimes} B) & 0\\
0 & 2(A\overset{\tilde{s}}{\otimes} B)
\end{bmatrix}.
\end{align*}
The off diagonal blocks are zero, since 
\[
Q^\top C \tilde{Q} = Q^\top T C T \tilde{Q} = -  Q^\top C \tilde{Q}.
\]
Let $(A\overset{s}{\otimes} B) = U\Lambda_{e}U^\top$ and
$(A\overset{\tilde{s}}{\otimes} B) = V \Lambda_{o}V^\top$
be the spectral decompositions of $(A\overset{s}{\otimes} B)$ and 
$(A\overset{\tilde{s}}{\otimes} B)$, respectively.
Then
\begin{align*}
 C &= \begin{bmatrix} QU & \tilde{Q}V \end{bmatrix} \begin{bmatrix} 2\Lambda_{e} & 0\\ 0 & 2\Lambda_{o} \end{bmatrix}
\begin{bmatrix} QU & \tilde{Q}V \end{bmatrix}^\top.
\end{align*}
By the definition of $Q$ and $\tilde{Q}$, the columns of $QU$ are symmetric and the columns of $QV$ are skew-symmetric. Therefore, the even spectrum of $C$ consists of the eigenvalues of $2(A\overset{s}{\otimes} B)$ and the odd spectrum of $C$ consists of the eigenvalues of $2(A\overset{\tilde{s}}{\otimes} B)$. Since $G$ is orthogonal, the eigenvalues of $G(A\overset{\tilde{s}}{\otimes} B)G^\top$ are the same as those of $(A\overset{\tilde{s}}{\otimes} B)$. Then the result follows by Cauchy's interlacing theorem.
\end{proof}

We remark that a sufficient condition for the interlacing of odd and even eigenvalues of the generalized Jordan-Kronecker product can be stated in the same spirit of Proposition~\ref{prop:stronginterlace}.

In the following, we show that the interlacing property holds for many structured pairs of matrices. Although the weak interlacing property \cite[Conjecture 2.10]{TuncelWolk2005} is stated for real symmetric matrices, we prove that it holds for certain real skew-symmetric matrices as well. 

Real skew-symmetric matrices have purely imaginary eigenvalues; on the other hand, the Jordan-Kronecker product of two skew-symmetric matrices is symmetric and therefore has real eigenvalues. We show that the interlacing property holds for two symmetric matrices as well as of two skew-symmetric matrices, provided one of the matrices has rank at most two.
\begin{thm}
\label{thm:rank2case}
Let $A, B \in \R^{n\times n}$ such that $\min{\left\{\rank(A), \rank(B)\right\}} \leq 2$. 
The odd eigenvalues of $A\otimes B + B\otimes A$ interlace its even eigenvalues when
\begin{enumerate}[(i)]
\item $A$ and $B$ are symmetric matrices, 
\item $A$ and $B$ are skew-symmetric matrices.
\end{enumerate}
\end{thm}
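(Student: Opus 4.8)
The plan is to verify the hypothesis of Proposition~\ref{prop:stronginterlace}: I will exhibit a $\skewn{n}$-by-$\skewn{n}$ orthogonal matrix $G$ for which $G(A\overset{\tilde{s}}{\otimes}B)G^\top$ is a principal submatrix of $(A\overset{s}{\otimes}B)$, whereupon Cauchy's interlacing theorem finishes the proof. Since the Jordan--Kronecker product is symmetric in $A$ and $B$, I may assume $\rank(B)\le 2$, and by Remark~\ref{rmk:Bdiag} (together with its skew-symmetric analogue, which I would record first for case (ii)) I may take $B$ in canonical form supported on the index set $\{1,2\}$: $B=\operatorname{diag}(b_1,b_2,0,\dots,0)$ in case (i), and $B=b(e_1e_2^\top-e_2e_1^\top)$ in case (ii). It is convenient to pass to the self-adjoint operator $\mathcal{L}(Y):=\tfrac12(AYB+BYA)$ acting on $(\bS^n,\langle\cdot,\cdot\rangle_F)$ and on $(\bK^n,\langle\cdot,\cdot\rangle_F)$, whose matrices in the orthonormal bases underlying $\svect$ and $\ssvect$ are exactly $(A\overset{s}{\otimes}B)$ and $(A\overset{\tilde{s}}{\otimes}B)$.

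I would then compute $\mathcal{L}$ entrywise in the standard matrix bases indexed by unordered pairs: the symmetric basis $\{e_ke_k^\top\}_{k}\cup\{S_{kl}\}_{k<l}$ with $S_{kl}=\tfrac{1}{\sqrt{2}}(e_ke_l^\top+e_le_k^\top)$, and the skew basis $\{\tilde S_{kl}\}_{k<l}$ with $\tilde S_{kl}=\tfrac{1}{\sqrt{2}}(e_ke_l^\top-e_le_k^\top)$. Writing $y_j=Ye_j$, a short computation in case (i) gives
\[
\mathcal{L}^{\mathrm{sym}}(Y)_{pq}=\tfrac12\big(b_q (Ay_q)_p + b_p (Ay_p)_q\big),\qquad
\mathcal{L}^{\mathrm{skew}}(Y)_{pq}=\tfrac12\big(b_q (Ay_q)_p - b_p (Ay_p)_q\big),
\]
so that $\mathcal{L}$ vanishes on every matrix whose first two rows and columns are zero, and deleting the $n$ diagonal basis elements $e_ke_k^\top$ leaves $\binom{n}{2}=\skewn{n}$ off-diagonal elements $S_{kl}$ in bijection with the skew basis $\tilde S_{kl}$.

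The heart of the argument is the comparison of off-diagonal entries. Evaluating $\langle S_{kl},\mathcal{L}^{\mathrm{sym}}S_{mr}\rangle_F$ against $\langle \tilde S_{kl},\mathcal{L}^{\mathrm{skew}}\tilde S_{mr}\rangle_F$ shows that the two agree up to a sign: the ratio is $-1$ exactly when the shared index is the larger endpoint of one edge and the smaller of the other (i.e.\ $l=m$ or $k=r$), and $+1$ otherwise. Because $B$ is supported on $\{1,2\}$, every entry with ratio $-1$ carries a factor $b_1$ or $b_2$ that forces an endpoint into $\{1,2\}$, and one checks that all such mismatches occur solely between the edge $\{1,2\}$ and the edges $\{2,j\}$, $j\ge 3$. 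I would then take $G$ to be the diagonal orthogonal map $\tilde S_{kl}\mapsto \varepsilon_{kl}\tilde S_{kl}$, which multiplies the $(\{k,l\},\{m,r\})$ entry by $\varepsilon_{kl}\varepsilon_{mr}$, and solve
\[
\varepsilon_{12}\varepsilon_{2j}=-1,\quad \varepsilon_{12}\varepsilon_{1j}=+1,\quad \varepsilon_{1i}\varepsilon_{1j}=+1,\quad \varepsilon_{2i}\varepsilon_{2j}=+1 \qquad (i,j\ge 3),
\]
which is satisfied by $\varepsilon_{12}=\varepsilon_{1j}=1$, $\varepsilon_{2j}=-1$, and $\varepsilon=1$ on all remaining edges. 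This cancels every $-1$ while fixing the $+1$ entries, realizing $G(A\overset{\tilde{s}}{\otimes}B)G^\top$ as the principal submatrix of $(A\overset{s}{\otimes}B)$ indexed by the off-diagonal basis vectors, and Proposition~\ref{prop:stronginterlace} concludes.

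The main obstacle is the existence of this consistent sign assignment, and this is precisely where the rank-two hypothesis enters: the constraints above are simultaneously solvable only because the support of $B$ confines all sign defects to the single ``hub'' at the pair $\{1,2\}$. For $\rank(B)\ge 3$ several hubs interact and the analogous sign system becomes inconsistent (a cocycle-type obstruction), which is consistent with the failure of interlacing exhibited later in the paper. For case (ii) I would run the same computation with $A,B$ skew and $B=b(e_1e_2^\top-e_2e_1^\top)$; the operator entries pick up predictable signs from $A^\top=-A$ and $B^\top=-B$, the defects again localize to the block indexed by $\{1,2\}$, and an analogous diagonal $G$ removes them.
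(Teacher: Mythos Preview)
Your proposal is correct and takes essentially the same route as the paper: reduce to $B$ of rank at most two in canonical form, then exhibit a diagonal orthogonal (sign) matrix so that the conjugated skew-symmetric Kronecker product coincides with the principal submatrix of $A\overset{s}{\otimes}B$ indexed by the off-diagonal basis elements $\{S_{kl}\}_{k<l}$, and conclude via Proposition~\ref{prop:stronginterlace}. The paper additionally carries out a (redundant) direct weak-interlacing construction before asserting its sign matrix $\Phi$ (with $\Phi_{kk}=-1$ for $k\le n-1$, which corresponds to the alternative valid assignment $\varepsilon_{1j}=-1$ for all $j\ge 2$), whereas you derive the sign pattern explicitly from the operator entries; both solve the same sign system, so the arguments are equivalent.
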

\begin{proof}
$(i)$ Let $A, B \in \bS^n$, we may assume, without loss of generality, that $\rank(B) \leq 2$. So, we let $A := \sum_{i=1}^n\alpha_ia_ia_i^\top$ and $B := \beta_1 e_1e_1^\top +\beta_2 e_2e_2^\top$, where $e_i$ is a vector of all zeros except its $i$th term is $1$. (We used Remark~\ref{rmk:Bdiag}.)
Denote the last $(n-2)$ entries of $a_i$ by $\underline{a}_i := \begin{bmatrix} a_{i3} & a_{i4} & \cdots & a_{in}\end{bmatrix}^\top$ and the $j$th entry of $a_i$ by $a_{ij}$. Let $W:=[w_{ij}] \in \bK^n$ with $\norm{W}_F = 1$. Define $\kw := \ovec{W}$ and $\underline{w}_i := \begin{bmatrix} w_{3i} & w_{4i} & \cdots & w_{ni}\end{bmatrix}^\top$, which consists of the last $(n-2)$ entries of $W(:, i)$, where $i\in\{1,2\}$. Then
\begin{align*}
\kw^\top (B\otimes A)\kw &= \beta_1\left( \alpha_1\left(W(:,1)^\top a_1\right)^2 + \cdots+\alpha_n\left(W(:,1)^\top a_n\right)^2\right)\\
& +    \beta_2\left( \alpha_1\left(W(:,2)^\top a_1\right)^2 +\cdots+ \alpha_n\left(W(:,2)^\top a_n\right)^2\right) \\
&= \beta_1\sum_{i=1}^n\alpha_i\left(w_{21}^2a_{i2}^2+ (\underline{w}_1^\top\underline{a}_i)^2+ 
2 w_{21}a_{i2}\left(\underline{w}_1^\top\underline{a}_i\right)\right)\\
&+ \beta_2\sum_{i=1}^n\alpha_i\left(w_{21}^2a_{i1}^2+ (\underline{w}_2^\top\underline{a}_i)^2
-2 w_{21}a_{i1}\left(\underline{w}_2^\top\underline{a}_i\right)\right).
\end{align*}
Let $U:= [u_{ij}] \in {\bS}^n$ be such that  all of its diagonal elements are zero.
Define $\ku := \ovec{U}$ and $\underline{u}_i := \begin{bmatrix} u_{3i} & u_{4i} & \cdots & u_{ni}\end{bmatrix}^\top$, for $i\in\{1,2\}$. 
Choosing $u_{21}:=-w_{21}$, $\underline{u}_1:=-\underline{w}_1$ and assigning the upper triangular part of $W(2:n,2:n)$ to the upper triangular part of ${U}(2:n,2:n)$ gives $\norm{U}_F =\norm{W}_F =1$. Then
\begin{align*}
\ku^\top (B\otimes A)\ku &= \beta_1\left( \alpha_1\left(U(:,1)^\top a_1\right)^2  + \cdots+\alpha_n\left(U(:,1)^\top a_n\right)^2\right)\\
& +    \beta_2\left( \alpha_1\left(U(:,2)^\top a_1\right)^2 +  \cdots + \alpha_n\left(U(:,2)^\top a_n\right)^2\right) \\
&= \beta_1\sum_{i=1}^n\alpha_i\left(u_{21}^2a_{i2}^2+ (\underline{u}_1^\top\underline{a}_i)^2+ 
2 u_{21}a_{i2}\left(\underline{u}_1^\top\underline{a}_i\right)\right)\\
&+ \beta_2\sum_{i=1}^n\alpha_i\left(u_{21}^2a_{i1}^2+ (\underline{u}_2^\top\underline{a}_i)^2
+2 u_{21}a_{i1}\left(\underline{u}_2^\top \underline{a}_i\right)\right)\\
& = \kw^\top (B\otimes A)\kw.
\end{align*}
This proves that for a given $W\in \bK^n$ with $\norm{ W}_F = 1$, there exists $U\in \bS^n$ with $\norm{U}_F =1$ such that
$\trace (AWBW^\top ) = \trace (AUBU )$.
Therefore, the weak interlacing property holds. 

Define the diagonal matrix $\Phi \in \bS^{\skewn{n}}$ by
\begin{align*}
\Phi_{kk} := \begin{cases}
    -1, & \text{if } k \in\{1,2, \ldots, n-1\},\\
    1, & \text{ otherwise}.
  \end{cases}
\end{align*}
$\Phi (A\overset{\tilde{s}}{\otimes}B) \Phi$ is a principal submatrix of $A\overset{s}{\otimes}B$. Then by Cauchy's interlacing theorem and Proposition~\ref{prop:stronginterlace}, it follows that the odd eigenvalues of $\left(A\otimes B + B\otimes A\right)$ interlace its even eigenvalues. Thus, the interlacing property holds. \\
$(ii)$ The proof for the skew-symmetric matrices is similar and is omitted. We refer the interested reader to the thesis \cite{thesis:nkalantarova} for the proof of this part.
\end{proof}
\begin{corollary}
\label{cor:n2case}
Let $A, B \in \bS^2$ (or $\bK^2$). Then the weak, interlacing and strong interlacing properties hold for $A, B$.
\end{corollary}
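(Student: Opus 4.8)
The plan is to recognize that Corollary~\ref{cor:n2case} is essentially a specialization of Theorem~\ref{thm:rank2case} to the case $n=2$, so almost no new work is required. The key observation is that every matrix in $\bS^2$ (or in $\bK^2$) has rank at most $2$, whence $\min\{\rank(A),\rank(B)\}\leq 2$ holds automatically; this is precisely the hypothesis of Theorem~\ref{thm:rank2case}.

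First I would invoke Theorem~\ref{thm:rank2case}, parts $(i)$ and $(ii)$, to conclude that the odd eigenvalues of $C:=(A\otimes B+B\otimes A)$ interlace its even eigenvalues. It is worth spelling out the indices: for $n=2$ the even spectrum has $s=3$ eigenvalues $\lambda_1\geq\lambda_2\geq\lambda_3$ (the dimension of $\bS^2$), while the odd spectrum has $t=1$ eigenvalue $\beta_1$ (the dimension of $\bK^2$). The interlacing inequality $\lambda_{s-t+i}\leq\beta_i\leq\lambda_i$ then collapses, at $i=1$, to $\lambda_3\leq\beta_1\leq\lambda_1$; that is, the unique odd eigenvalue lies between the smallest and the largest even eigenvalue. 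This establishes the interlacing property.

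Next I would obtain the weak interlacing property for free, since, as already observed in this section, the interlacing property implies the weak interlacing property; equivalently, the chain $\lambda_3\leq\beta_1\leq\lambda_1$ is exactly the assertion that the extreme eigenvalues of $C$ are even, which is the content of \eqref{conj1:eq1}--\eqref{conj1:eq2}. Finally, I would note that the strong interlacing property holds vacuously at $n=2$: its defining index set $\{2,\ldots,n-1\}$ is empty, so there is no position $k$ with $2\leq k\leq n-1$ carrying an odd eigenvalue to be tested, and the condition is satisfied by default. With all three properties verified, the corollary follows. There is no genuine analytic obstacle; the only care needed is the index bookkeeping and the verification that the strong interlacing index set is indeed empty when $n=2$, so that the property holds trivially rather than as a boundary case requiring separate analysis.
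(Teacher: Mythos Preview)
Your argument is correct and is precisely the route the paper takes: the corollary is stated with no separate proof because it is an immediate specialization of Theorem~\ref{thm:rank2case} when $n=2$.

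One small remark on the strong interlacing part. You treat it as vacuous because the index set $\{2,\ldots,n-1\}$ is empty at $n=2$; that is a faithful reading of the definition as printed. However, since $C$ has $n^2=4$ eigenvalues and the strong interlacing statement is meant to concern positions within the full spectrum of $C$, the intended index set is more naturally $\{2,\ldots,n^2-1\}=\{2,3\}$. Even under that reading your conclusion stands with no extra work: by the interlacing you already established, $\lambda_3\le\beta_1\le\lambda_1$, so the unique odd eigenvalue cannot occupy the first or last position in the sorted spectrum of $C$; whichever of positions $2$ or $3$ it occupies, its two neighbors are automatically even since there is only one odd eigenvalue. Mentioning this would make the argument robust to either interpretation of the definition.
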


\begin{proposition}
\label{prop:n3case}
Let $A, B\in \bS^3$. Then the weak interlacing and the interlacing properties hold for $A, B$.
\end{proposition}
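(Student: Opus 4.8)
The plan is to isolate the genuinely new case and then cut down the amount of interlacing that actually has to be verified. First I would invoke Theorem~\ref{thm:rank2case}, which already settles every pair with $\min\{\rank(A),\rank(B)\}\le 2$; so it suffices to treat $A,B\in\bS^3$ both nonsingular. By Remark~\ref{rmk:Bdiag} I may take $B=\Diag(b_1,b_2,b_3)$ with $b_1\ge b_2\ge b_3$. Next I would exploit the symmetry $A\mapsto -A$: since the operator $X\mapsto AXB+BXA$ satisfies $\mathcal{L}_{-A,B}=-\mathcal{L}_{A,B}$, replacing $A$ by $-A$ negates and reverses both the even spectrum $\lambda_1\ge\cdots\ge\lambda_6$ and the odd spectrum $\beta_1\ge\beta_2\ge\beta_3$. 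Consequently the lower interlacing bounds $\lambda_{i+3}\le\beta_i$ for $(A,B)$ are precisely the upper bounds $\beta_{4-i}\le\lambda_{4-i}$ for $(-A,B)$, so it is enough to prove the three upper bounds $\beta_i\le\lambda_i$, $i=1,2,3$, for all admissible pairs. Since the interlacing property implies weak interlacing, establishing these bounds finishes the proposition.

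The second step is to make both spectra explicit. By Proposition~\ref{prop:stronginterlace} the even and odd eigenvalues are those of $2(A\overset{s}{\otimes}B)$ and $2(A\overset{\tilde{s}}{\otimes}B)$, that is, of the operators $U\mapsto AUB+BUA$ on $\bS^3$ and $W\mapsto AWB+BWA$ on $\bK^3$. For $n=3$ the odd side simplifies dramatically: identifying a skew matrix $W\in\bK^3$ with the vector $w\in\R^3$ via $Wx=w\times x$, and using the polarization $AWB+BWA=(A+B)W(A+B)-AWA-BWB$ together with $MWM\leftrightarrow\operatorname{adj}(M)\,w$ for symmetric $M$ (where $\operatorname{adj}$ is the classical adjugate), the odd operator collapses to the single $3$-by-$3$ symmetric matrix $N:=\operatorname{adj}(A+B)-\operatorname{adj}(A)-\operatorname{adj}(B)$. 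Thus $\beta_1,\beta_2,\beta_3$ are exactly the eigenvalues of $N$, which I can write out entrywise in terms of the entries $a_{ij}$ of $A$ and the $b_k$ (in the diagonal case this recovers $a_ib_j+a_jb_i$). The even operator is the compression of $2(B\otimes A)$ to the $6$-dimensional symmetric vec-subspace, and it genuinely couples the diagonal and off-diagonal parts of $U$.

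The crux, and the step I expect to be the main obstacle, is the comparison $\beta_i\le\lambda_i$. The clean sufficient condition of Proposition~\ref{prop:stronginterlace} asks for an orthogonal $G$ making $G(A\overset{\tilde{s}}{\otimes}B)G^\top$ a principal submatrix of $A\overset{s}{\otimes}B$; equivalently, via Courant--Fischer, for each $i$ I want an $i$-dimensional subspace $S_i\subseteq\bS^3$ on which the even Rayleigh quotient stays $\ge\beta_i$, built from the top-$i$ eigenvectors of $N$. The natural first attempt is the ``hollow'' lift sending $w$ to the zero-diagonal symmetric matrix with the same off-diagonal entries; however a direct computation shows that its compressed even operator equals $N$ with \emph{all} off-diagonal entries sign-reversed, and for a full $3$-by-$3$ matrix this does not preserve the spectrum (no single signature matrix can flip all three off-diagonal signs consistently). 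Hence the hollow lift must be corrected by genuine diagonal terms, and the difficulty is to arrange this uniformly in $A$ so that the resulting $3$-dimensional compression dominates $N$ (ideally in the Loewner order, which would yield $\beta_i\le\lambda_i$ for all $i$ simultaneously). I would attempt this either by an explicit construction of $S_i$ from the spectral data of $N$ and $A$, or---since after the reductions the problem is finite in the six parameters $a_{ij}$ and the three $b_k$---by reducing each $\beta_i\le\lambda_i$ to polynomial inequalities through the characteristic polynomials of $N$ and of the even $6$-by-$6$ operator, and verifying them directly. Once the three upper bounds are in hand, the lower bounds follow from the $A\mapsto-A$ symmetry, and the interlacing (hence weak interlacing) property is established.
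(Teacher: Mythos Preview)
Your proposal misses the paper's central observation and, as a result, leaves the key step genuinely open. The paper does \emph{not} try to compare the $6$-dimensional even operator with the $3$-dimensional odd operator via Courant--Fischer or an explicit Loewner comparison. Instead it exploits a special feature of $n=3$: every $W\in\bK^3$ has rank at most two (its eigenvalues are $\pm i\lambda,0$). Hence for any unit $W$ there is an orthogonal $Q$ with
\[
W=Q\begin{bmatrix}0&\lambda_1&0\\ -\lambda_1&0&0\\ 0&0&0\end{bmatrix}Q^\top,
\]
so $\trace(AWBW^\top)$ depends only on the leading $2$-by-$2$ principal blocks of $Q^\top AQ$ and $Q^\top BQ$. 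These blocks have rank at most two, and Theorem~\ref{thm:rank2case} (applied in dimension two) produces a symmetric matrix, which one then pads and conjugates back to a $U\in\bS^3$ of the same norm with $\trace(AUBU)=\trace(AWBW^\top)$. Thus the reduction is through the rank of $W$, not the ranks of $A$ and $B$; your case split on $\min\{\rank A,\rank B\}\le 2$ versus both full rank is the wrong dichotomy here.

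By contrast, your route via the adjugate identity $N=\operatorname{adj}(A+B)-\operatorname{adj}(A)-\operatorname{adj}(B)$ is elegant for describing the odd spectrum, but the comparison step is where the actual work lies, and you correctly diagnose that the ``hollow'' embedding fails (the sign pattern cannot be fixed by a single signature conjugation when all three off-diagonals are present). The proposal then ends with ``I would attempt this either by an explicit construction \ldots\ or by reducing \ldots\ to polynomial inequalities,'' which is a plan, not an argument: neither a corrected lift nor the polynomial verification is supplied, so the inequalities $\beta_i\le\lambda_i$ remain unproved. In short, the crux you flagged is exactly the gap, and the paper bypasses it entirely by the rank-two observation on skew $3\times 3$ matrices.
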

\begin{proof}
Let $W\in \bK^3$ be given such that $\|{W}\|_F=1$. We will show that there exists $U\in \bS^3$ such that
$\|{U}\|_F=1$ and $\trace(AWBW^\top) = \trace(AUBU)$.

Since $W\in \bK^3$, $W$ is at most of rank $2$. (Recall that real skew-symmetric matrices have purely imaginary eigenvalues, and these eigenvalues come in pairs, $\pm i\lambda$.)
Then 
\begin{align*}
W := Q\begin{bmatrix}0 & \lambda_1 & 0\\
-\lambda_1 &0 &0\\
0&0&0 \end{bmatrix}Q^\top,
\end{align*}
for some orthogonal matrix $Q$ such that $2\lambda_1^2=1$.
Then
\[
\trace(AWBW^\top) = \trace(AQ\tilde{\Lambda}Q^\top BQ\tilde{\Lambda}^\top Q^\top) = \trace(\bar{A}\tilde{\Lambda}\bar{B}\tilde{\Lambda}^\top)=\trace(\bar{A}_0\tilde{\Lambda} \bar{B}_0\tilde{\Lambda}^\top),\]
where $\bar{A}_0, \bar{B}_0$ is the $2$-by-$2$ leading principal matrix of $\bar{A}$ and $\bar{B}$, respectively.  
Since, both $\bar{A}_0, \bar{B}_0$ are of rank at most $2$, the rest follows from Theorem~\ref{thm:rank2case}. 
\end{proof}

Consider simultaneously diagonalizable matrices. Some examples of this class are diagonal matrices and circulant matrices, which are used extensively in signal processing and statistics. A result from \cite{TuncelWolk2005} shows that the strong interlacing (which implies the weak interlacing) property holds for pair of simultaneously diagonalizable matrices. We restate it below.

\begin{lemma}\emph{\cite[Corollary 2.5]{TuncelWolk2005}}
\label{lem:commuting}
Let $A, B \in \bS^n$ be commuting matrices. Let
$\lambda_i, \mu_j$ denote the eigenvalues and $v_i, v_j$ be the corresponding eigenvectors of
$A$ and $B$, respectively. Then, for $1\leq i \leq j \leq n$, we get $\frac{1}{2}(\lambda_i \mu_j + \lambda_j \mu_i)$ as the 
eigenvalues and $\svec{v_i v_j^\top + v_j v_i^\top}$ as the corresponding eigenvectors of $A\overset{s}{\otimes} B$, where
$\svect : \bS^n\rightarrow \R^{\symn{n}}$ is a mapping such that such that for every $n$-by-$n$ real symmetric matrix $X:=[x_{ij}]$,
\begin{equation*}
\small
\svec{X} := \begin{bmatrix} x_{11}&  \sqrt{2}x_{21} & \cdots &  \sqrt{2}x_{n1} & x_{22}&  \sqrt{2}x_{32} &\cdots &  \sqrt{2}x_{n2} & \cdots & x_{nn}\end{bmatrix}^\top.
\end{equation*}
\end{lemma} 
Lemma~\ref{lem:commuting} also applies to a symmetrized similarity operator \cite{Zhang:1998} as follows. For every nonsingular matrix $P \in \R^{n\times n}$, \cite{Zhang:1998} define $H_P:\R^{n\times n} \rightarrow \bS^n$ by
\[
H_P(X):= PXP^{-1} + P^{-\top} X^\top P^\top.
\]
Let us restrict the domain of $H_P$ to $\bS^n$ and restrict $P$ to symmetric matrices. Then the resulting operator $H_P$ is representable by a Jordan-Kronecker product of symmetric matrices $P$ and $P^{-1}$. Since $P, P^{-1}$ commute, Lemma~\ref{lem:commuting} applies.

Given the result for simultaneously diagonalizable matrices, an interesting direction to explore is to determine how one can perturb $A$ or $B$ so that the weak, the strong, or the interlacing property will still be preserved. 
The next two propositions can be proved using the proof technique given for Theorem~\ref{thm:rank2case}. So, proofs of Propositions~\ref{prop:diagpert} and \ref{prop:diagpert2} are omitted.
\begin{proposition}
\label{prop:diagpert}
Let $A, B \in \bS^n$ be diagonal matrices, where $A_{kk} := a_{kk}$, for every $k\in\{1, 2, \ldots, n\}$. Let 
\[\tilde{A} := A +  \sum_{i=1}^{ \floor*{k/2}}\alpha_i^{(k)}(E_{i (k-i+1)} + E_{i (k-i+1)}^\top), \: \text{ i.e.,} \]

\begin{equation*}
\small
\tilde{A}: = 
\begin{bmatrix}
 a_{11}    		& 0        		& \cdots             &  \cdots   &     \cdots         &  0                     &  \alpha_1^{(k)}    & 0      	& \cdots & 0\\ 
 0        		& a_{22}    		& 0            &  \cdots   & 0           &\alpha_2^{(k)}    &  0                        &  \vdots	        & \cdots & 0\\ 
\vdots   		& 0       		& \ddots    &      & \iddots   &  0      	            &  	\vdots	                 &       	& \cdots & 0\\ 
\vdots   		& \vdots      	& \vdots    &  \iddots  & \iddots      &  \vdots      	    & \vdots 		        & \vdots 	&  & \vdots \\  
\vdots    		& 0        		&  \iddots  &  \iddots     & \ddots   &  0      		    &   \vdots		&  	&  & \\
0         		& \alpha_2^{(k)}& 0           &  \cdots     & 0          &  a_{(k-1)(k-1)}             &  0 			& \vdots 	&  & \vdots\\ 
\alpha_1^{(k)}   & 0        		& \vdots     &       & \vdots    &  0      		    &  a_{kk} 			& 0 		&  & \\ 
0    			& \vdots            & \vdots     &       & \vdots    &  \vdots                 & 0 			& \ddots 	& \ddots & \vdots\\ 
\vdots    		&  \vdots       	& \vdots     &       & \vdots  &   \vdots     		            & \vdots 		& \ddots 	& \ddots & 0\\ 
0    			& 0        		& 0      &  \cdots     & 0        &  0      		    & 0 			& \cdots 	& 0      & a_{nn}\\   
\end{bmatrix},
\end{equation*}
where $E_{ik}$ is an $n$-by-$n$ matrix with all entries zero except $E(i,k) = 1$, and $\alpha_i^{(k)} \in \R$, $i\in\{1,2, \ldots,\floor*{k/2}\}.$
Then, the interlacing property (as well as the weak interlacing property) holds for the pair $\tilde{A}, {B}$. 
\end{proposition}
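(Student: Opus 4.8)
The plan is to reproduce, in this perturbed setting, the two-step scheme used for Theorem~\ref{thm:rank2case}: first prove the weak interlacing property by constructing an explicit norm-preserving bijection $W\mapsto U$ from unit skew-symmetric to unit symmetric matrices that preserves $\trace(\tilde A\,\cdot\,B\,\cdot\,)$, and then upgrade to the full interlacing property by recognizing this bijection as a diagonal $\pm1$ congruence that exhibits $\tilde A\overset{\tilde{s}}{\otimes}B$ as a principal submatrix of $\tilde A\overset{s}{\otimes}B$, so that Proposition~\ref{prop:stronginterlace} and Cauchy's interlacing theorem apply. By Remark~\ref{rmk:Bdiag} I keep $B=\Diag(b_1,\dots,b_n)$ diagonal throughout.

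The structural observation driving everything is that $\tilde A$ is supported only on its main diagonal and on the anti-diagonal positions $(i,\bar i)$, $\bar i:=k+1-i$, of its leading $k\times k$ block; hence $\tilde A e_i=a_{ii}e_i+c_i e_{\bar i}$, where $c_i$ is the corresponding perturbation coefficient (and $c_i=0$ when $i>k$), so $\tilde A$ couples each index only to its partner $\bar i$. Writing $W=[w_{ij}]\in\bK^n$ with columns $w_j$, diagonality of $B$ gives
\[
\trace(\tilde A W B W^\top)=\sum_j b_j\, w_j^\top \tilde A w_j
=\sum_j b_j\Big(\sum_l a_{ll}w_{lj}^2 + 2\sum_{i=1}^{\floor{k/2}} \alpha_i^{(k)} w_{ij}w_{\bar{i}j}\Big),
\]
so only two kinds of terms appear: diagonal squares $w_{lj}^2$ and the cross terms $w_{ij}w_{\bar{i}j}$ coming from the anti-diagonal.

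First I would construct $U$ by an entrywise sign flip $u_{lj}:=\eta_{lj}w_{lj}$ (with $u_{ll}=0$) for a fixed sign pattern $\eta_{lj}\in\{\pm1\}$. Requiring $U$ symmetric forces $\eta_{lj}=-\eta_{jl}$; any sign choice already preserves $\|U\|_F=\|W\|_F$ and each diagonal square $w_{lj}^2$, so the only genuine requirement is that every cross term survive, i.e. $\eta_{ij}\eta_{\bar{i}j}=1$, equivalently $\eta_{ij}=\eta_{\bar{i}j}$ for all $j\notin\{i,\bar i\}$. Such an $\eta$ exists: since the constraints ask $\eta$ to be invariant, in each argument, under the involution $i\mapsto\bar i$, it descends to an antisymmetric $\pm1$ function on the partner-blocks $\{i,\bar i\}$ (and singletons), which is realized by fixing any total order on those blocks. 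With this $\eta$ one gets $\trace(\tilde A W B W^\top)=\trace(\tilde A U B U)$ for every $W\in\bK^n$, whence every odd value of the relevant Rayleigh quotient is also an even value and both inequalities \eqref{conj1_2:eq1}--\eqref{conj1_2:eq2} of weak interlacing follow.

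For the full interlacing property I would note that the same Hadamard sign map sends the skew atom proportional to $e_pe_q^\top-e_qe_p^\top$ to $\eta_{pq}$ times the symmetric atom $e_pe_q^\top+e_qe_p^\top$; in the common $\{p,q\}$-indexing of the off-diagonal atoms it is therefore the diagonal signature $\Phi:=\Diag(\eta_{pq})\in\bS^{\skewn n}$. Polarizing the identity $\trace(\tilde A W B W^\top)=\trace(\tilde A U B U)$ then upgrades the preserved quadratic form to the matrix identity that $\Phi(\tilde A\overset{\tilde{s}}{\otimes}B)\Phi$ equals the principal submatrix of $\tilde A\overset{s}{\otimes}B$ obtained by deleting the rows and columns indexed by the diagonal atoms $e_pe_p^\top$; Proposition~\ref{prop:stronginterlace} then yields the interlacing property. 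The main work is thus concentrated in two checks: extracting the precise sign condition $\eta_{ij}=\eta_{\bar{i}j}$ from cross-term preservation and confirming that it admits a globally consistent solution, and verifying that the off-diagonal atom $\{i,\bar i\}$ feeds only into the deleted diagonal-atom rows and columns so that the principal-submatrix identification is exact. Both are made tractable by the partner-pairing $i\leftrightarrow k+1-i$ that defines the perturbation $\tilde A$.
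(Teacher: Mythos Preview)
Your proposal is correct and follows essentially the same approach the paper indicates: the paper omits the proof, stating only that it ``can be proved using the proof technique given for Theorem~\ref{thm:rank2case},'' and you have carried out precisely that scheme---an entrywise sign-flip $W\mapsto U$ preserving the quadratic form, followed by the diagonal signature $\Phi$ that exhibits $\tilde A\overset{\tilde s}{\otimes}B$ as a principal submatrix of $\tilde A\overset{s}{\otimes}B$ so that Proposition~\ref{prop:stronginterlace} applies. One minor remark: your final caveat about checking that ``the off-diagonal atom $\{i,\bar i\}$ feeds only into the deleted diagonal-atom rows and columns'' is unnecessary, since the polarization of the quadratic-form identity already yields the full bilinear-form identity on hollow symmetric matrices, which is exactly the off-diagonal principal submatrix; no separate structural verification is needed there.
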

\begin{proposition}
\label{prop:diagpert2}
Let $A$ be an $n$-by-$n$ symmetric tridiagonal matrix and $B$ be an $n$-by-$n$ diagonal matrix. Then the odd eigenvalues of $A\otimes B+ B\otimes A$ interlace its even eigenvalues. 
Furthermore, define $\bar{A}\in \bS^n$ such that   $\bar{A}_{ij} = A_{ij}$ for all $i, j \in \{1,\ldots, n\}$ except $A_{rs} \not = 0$, $(r>s+1)$, and $A_{(s+1)s}=0$ for some $r, s$. Then  the odd eigenvalues of $\bar{A}\otimes B+ B\otimes \bar{A}$ interlace its even eigenvalues. 
\end{proposition}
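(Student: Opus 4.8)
The plan is to realize the odd block $2\bigl(A\overset{\tilde{s}}{\otimes}B\bigr)$ as a compression of the even block $2\bigl(A\overset{s}{\otimes}B\bigr)$ — in the first statement literally as a principal submatrix — and then invoke Proposition~\ref{prop:stronginterlace} together with Cauchy's interlacing theorem. By the identity $(A\otimes B+B\otimes A)\ovec{X}=\ovec{AXB+BXA}$, both blocks are the Gram matrices of the single operator $X\mapsto AXB+BXA$ restricted to $\bS^n$ and to $\bK^n$. First I would compute their entries in the orthonormal bases of symmetric and skew matrix units $\tfrac{1}{\sqrt{2}}(e_ie_j^\top\pm e_je_i^\top)$ (together with $e_ie_i^\top$ on the symmetric side), indexed by pairs $(i,j)$ with $i\le j$ (resp. $i<j$). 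Since $B=\Diag(\beta_1,\dots,\beta_n)$ is diagonal (Remark~\ref{rmk:Bdiag}), a direct computation shows that the operator couples $(i,j)$ to $(i',j')$ only when $\{i',j'\}$ differs from $\{i,j\}$ in one index through a nonzero entry of $A$, giving a weighted nearest-neighbour (``grid'') structure, and that the symmetric and skew Gram entries on the off-diagonal pairs agree except for the two terms carrying $A_{il}\delta_{jk}$ and $A_{jk}\delta_{il}$, which enter with opposite signs.

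For the first statement ($A$ tridiagonal), the key observation is that these discrepancy terms force the index pattern $i<j=k<l$ (resp. $k<l=i<j$), hence $|i-l|\ge 2$ (resp. $|j-k|\ge 2$), so $A_{il}=A_{jk}=0$ by tridiagonality and the discrepancy vanishes identically. Thus $2\bigl(A\overset{\tilde{s}}{\otimes}B\bigr)$ equals exactly the principal submatrix of $2\bigl(A\overset{s}{\otimes}B\bigr)$ obtained by deleting the $n$ diagonal-pair rows and columns; taking $G=I$ in Proposition~\ref{prop:stronginterlace} and applying Cauchy's theorem gives the interlacing. This is the tridiagonal analogue of the sign-matrix step in the proof of Theorem~\ref{thm:rank2case}.

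For the perturbed matrix $\bar A$, the single off-band entry $\bar A_{rs}=\bar A_{sr}\ne 0$ revives the discrepancy, but only on the pairs $\{(s,m),(m,r)\}$ with $s<m<r$: there the symmetric and skew entries differ in sign, equalling $\pm\beta_m\bar A_{sr}$, while every band-induced entry still agrees. The plan is to absorb these localized sign flips by a congruence $G$ and then argue as before. The role of the hypothesis $\bar A_{(s+1)s}=0$ is precisely to cut the band link between indices $s$ and $s+1$, which keeps the graph of $\bar A$ a tree and decouples the block of affected pairs from the conflicting band couplings that would otherwise surround them.

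The main obstacle is the consistency of this correction, and I expect it to be the bulk of the work. A plain diagonal $\pm1$ matrix $G$ does not suffice: the ``agree/flip'' labels define a signed graph on the off-diagonal pairs that can contain a frustrated cycle — for $n=4,\ r=3,\ s=1$ one finds the $6$-cycle $(1,2)\,(1,3)\,(1,4)\,(3,4)\,(2,4)\,(2,3)$ carrying a single flip — so no diagonal scaling turns $2\bigl(A\overset{\tilde{s}}{\otimes}B\bigr)$ into the natural principal submatrix of $2\bigl(A\overset{s}{\otimes}B\bigr)$. Consequently the congruence must be genuinely non-diagonal: I would use the $n$ extra diagonal-pair directions to build an isometric embedding $\bK^n\hookrightarrow\bS^n$ that rotates within each affected $\{(s,m),(m,r)\}$ block while preserving the quadratic form $W\mapsto\trace(\bar A W B W^\top)$, and then invoke the Poincaré separation theorem (the compression analogue of the principal-submatrix condition of Proposition~\ref{prop:stronginterlace}) to conclude. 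Checking that this embedding preserves the form globally — across all affected $m$ and compatibly with the surviving band couplings, which is exactly what $\bar A_{(s+1)s}=0$ enables — is the crux.
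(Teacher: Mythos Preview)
For the first statement your argument is correct and is exactly what the paper intends: the diagonal-$\Phi$ device from the proof of Theorem~\ref{thm:rank2case} (here even with $\Phi=G=I$) makes $A\overset{\tilde{s}}{\otimes}B$ a principal submatrix of $A\overset{s}{\otimes}B$, and Proposition~\ref{prop:stronginterlace} together with Cauchy's theorem finishes it. Your computation of the four coupling terms and your observation that the two ``bad'' Kronecker deltas force the relevant $A$-entry at distance at least two are right.

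For the second statement your proposal goes \emph{beyond} the paper. The paper gives no proof and simply asserts that the same technique as in Theorem~\ref{thm:rank2case} works. You have in fact discovered that this assertion, read literally, is \emph{not} adequate: your signed $6$--cycle $(1,2)\text{--}(1,3)\text{--}(1,4)\text{--}(3,4)\text{--}(2,4)\text{--}(2,3)\text{--}(1,2)$ for $n=4$, $r=3$, $s=1$ (with $\bar A_{13},\bar A_{23},\bar A_{34}$ all nonzero and $\bar A_{12}=0$) carries exactly one negative edge, so no diagonal $\pm1$ conjugation can turn $\bar A\overset{\tilde{s}}{\otimes}B$ into the natural principal submatrix of $\bar A\overset{s}{\otimes}B$. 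This is a genuine and correct observation that the paper's one-line pointer does not address.

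Your proposed remedy---replace the sign matrix by a non-diagonal isometric embedding $\bK^n\hookrightarrow\bS^n$ that uses the $n$ extra diagonal directions $E_{ii}$ to absorb the localized sign flips, and then invoke Poincar\'e separation in place of Cauchy---is the right shape of argument, and you correctly locate the crux in checking that the rotation on the blocks $\{(s,m),(m,r)\}$ can be made compatible with the surviving tridiagonal couplings (this is precisely where $\bar A_{(s+1)s}=0$ should be used). That step, however, is only sketched: you have not exhibited the embedding, nor verified the form identity across all affected $m$. So while your plan is more honest than the paper's omitted proof, it is not yet a proof; completing the construction of the isometry (or, equivalently, an explicit $\skewn{n}\times\symn{n}$ matrix $\Psi$ with $\Psi^\top\Psi=I$ and $\Psi^\top(\bar A\overset{s}{\otimes}B)\Psi=\bar A\overset{\tilde{s}}{\otimes}B$) is what remains.
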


Proposition~\ref{prop:diagpert2} gives another sufficient condition for the weak interlacing property. 
For every pair of matrices $A, B \in \bS^n$ where $B$ is diagonal and $A$ is formed by perturbing a diagonal matrix where each row of the upper triangular matrix of $A$ has at most one nonzero entry, the odd eigenvalues of $A\otimes B+ B\otimes A$ interlace its even eigenvalues.

The eigenvalues of  the Jordan-Kronecker product that correspond to the symmetric eigenvectors are the same as the eigenvalues of $2(A\overset{s}{\otimes} B)$. In Theorem 2.9 \cite{TuncelWolk2005}, the authors proved that \; $\forall A, B\in\bS^n$, \;
$A\otimes B \succ 0  \;\;\; \Longleftrightarrow  \;\;\; A\overset{s}{\otimes} B\succ 0.$
From this, one can easily obtain
\[
(A\overset{s}{\otimes} B)\succ  0  \;\;\; \Longleftrightarrow  \;\;\;  A{\otimes} B+ B\otimes A\succ 0.
\]
However, $(A\overset{\tilde{s}}{\otimes} B)\succ  0$ does not necessarily imply $(A{\otimes} B)\succ 0$ (there exist numerous counterexamples). 
If either $A$ (or $B$) has a zero eigenvalue corresponding to an eigenvector, say, $v$, then taking $U:=vv^\top$ shows that the eigenvector of $A\otimes B+ B\otimes A$ corresponding to the smallest eigenvalue is symmetric, as 
\[
\trace(AUBU)= \trace(Avv^\top Bvv^\top) = \trace\left((v^\top Av)v^\top Bv\right) = 0.
\]
Furthermore, based on extensive numerical experiments, it seems that the minimum eigenvalue of the Jordan-Kronecker product of two positive definite matrices corresponds to a symmetric eigenvector. We state this as a conjecture.
\begin{conjecture} 
Let $A, B \in \bS^n$ be positive definite matrices. Then
\begin{equation}
\label{conj_new_1}
\min_{Tu = u}\dfrac{u^\top (A \otimes B) u}{u^\top u} \leq \min_{Tw = -w}\dfrac{w^\top (A \otimes B) w}{w^\top w}.
\end{equation}
\end{conjecture}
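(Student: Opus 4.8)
The plan is to reduce the conjecture to an equivalent trace statement and then to build, from a minimizing skew-symmetric matrix, a symmetric competitor of no larger Rayleigh quotient. Using the equivalences recorded in \eqref{conj1:eq1}--\eqref{conj1_2:eq1}, the inequality \eqref{conj_new_1} is exactly the lower half of the weak interlacing property, namely
\[
\min_{U\in\bS^n,\ \norm{U}_F=1}\trace(AUBU)\ \leq\ \min_{W\in\bK^n,\ \norm{W}_F=1}\trace\!\left(AWBW^\top\right),
\]
for $A,B\succ 0$. By Remark~\ref{rmk:Bdiag} I may assume $B$ is diagonal with positive diagonal entries, with $A\succ0$ throughout. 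Let $W^\ast$ attain the right-hand minimum. Since $W^\ast$ is real skew-symmetric, I would put it in canonical form $W^\ast=\sum_{k}s_k\left(f_kg_k^\top-g_kf_k^\top\right)$ with $\{f_k,g_k\}$ orthonormal, $s_k>0$, and $2\sum_k s_k^2=1$; the goal is then to produce a single symmetric $U$ with $\norm{U}_F=1$ and $\trace(AUBU)\leq\trace(AW^\ast B{W^\ast}^\top)$.

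The key building block is the two-dimensional case. For a single canonical block $J=fg^\top-gf^\top$ supported on $\mathrm{span}\{f,g\}$, a direct computation gives $\trace(AJBJ^\top)=a_{ff}b_{gg}+a_{gg}b_{ff}-2a_{fg}b_{fg}$, where $a_{\bullet\bullet},b_{\bullet\bullet}$ denote the entries of the compressions of $A,B$ to $\mathrm{span}\{f,g\}$. These compressions are positive definite, so the restriction of the whole problem to this subspace is an instance of the $n=2$ case, for which Corollary~\ref{cor:n2case} (equivalently Theorem~\ref{thm:rank2case}) guarantees a symmetric matrix on $\mathrm{span}\{f,g\}$ whose value does not exceed that of $J$. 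For instance $U'=\tfrac1{\sqrt2}(ff^\top-gg^\top)$ satisfies $\trace(AU'BU')=\tfrac12\!\left(a_{ff}b_{ff}+a_{gg}b_{gg}-2a_{fg}b_{fg}\right)$, which beats the normalized block value precisely when $(a_{ff}-a_{gg})(b_{ff}-b_{gg})\leq0$, while a complementary candidate among $\tfrac1{\sqrt2}(fg^\top+gf^\top)$, $ff^\top$, $gg^\top$ covers the opposite sign (their combined sufficiency being exactly the content of the proven $n=2$ case). Consequently a rank-two skew minimizer is handled immediately: if one can show the outer minimum above is attained at some $W^\ast$ of rank two --- as happens, for example, whenever $A,B$ commute, by Lemma~\ref{lem:commuting} --- then the conjecture follows at once from Theorem~\ref{thm:rank2case}.

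The main obstacle is precisely that a minimizing $W^\ast$ need not have rank two, and the two natural subprojects --- (a) proving the odd minimum is always attained at a rank-two matrix, or (b) assembling a global symmetric competitor directly from the canonical form --- both collide with the non-commutativity of $A$ and $B$. For route (b), replacing $W^\ast$ by $U=\sum_k s_k(f_kg_k^\top+g_kf_k^\top)$ (or by any per-block substitute) introduces cross-block terms of the form $(f_k^\top A g_l)(f_k^\top B g_l)$ whose sign is not controlled, so there is no term-by-term domination and the per-subspace argument does not simply add up. I expect the decisive step to be either a structural proof that the smallest odd eigenvector of the operator $\mathcal J:X\mapsto AXB+BXA$ can be chosen of rank two when $A,B\succ0$, or a convexity/semidefinite argument bounding $\min_{\bK^n}\trace(AWBW^\top)$ below by $\min_{\bS^n}\trace(AUBU)$ that exploits positive definiteness globally --- for example through the symmetric congruence $X\mapsto A^{-1/2}XA^{-1/2}$, which preserves both $\bS^n$ and $\bK^n$ and normalizes $A$, reducing the problem to a weighted Rayleigh quotient built from $A^{-1/2}BA^{-1/2}$ and the weight $A^{-1}$. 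Controlling that weighting, or equivalently the coupling across canonical blocks, is where the real difficulty lies, and is why I regard the general positive definite case as genuinely hard.
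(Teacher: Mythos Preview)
The paper does not prove this statement: it is explicitly presented as a conjecture, motivated only by numerical experiments, and left open. There is therefore no proof in the paper to compare your attempt against.

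Your proposal is not a proof either, and you are candid about this. The reduction to the trace formulation via \eqref{conj1_2:eq1} is correct, the diagonalization of $B$ through Remark~\ref{rmk:Bdiag} is legitimate, and your two-dimensional block computation is accurate --- it recovers precisely the $n=2$ case handled by Corollary~\ref{cor:n2case}. But the step you flag as missing really is missing: nothing forces the odd minimizer $W^\ast$ to have rank two, and the cross-block terms of the form $(f_k^\top A g_l)(f_k^\top B g_l)$ that appear when you substitute block-by-block are exactly what prevents the per-subspace inequalities from summing. Your proposed normalizations (e.g.\ the congruence by $A^{-1/2}$) change the weighting but do not remove that coupling. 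Your conclusion that the general positive-definite case is genuinely hard is consistent with the paper's own decision to state it as an open conjecture rather than a theorem.
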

In primal-dual interior-point methods for semidefinite optimization, the new iterates are computed by adding the search directions to the current primal and dual points using a line search to make sure the new iterates are feasible. 
Usually the feasibility condition requires checking the positive semidefiniteness of certain matrices, which is determined by the minimum eigenvalue of those matrices. In this respect, we believe the study of this conjecture may be useful.

The following theorem provides a set of perturbations which allows constructing nontrivial pairs of matrices satisfying the weak interlacing property. Furthermore, it helps improve our understanding of the spectral properties of $A\otimes B+ B\otimes A$ in terms of the spectral properties of $A$ and $B$.
\begin{thm}
\label{thm:perturbs}
Let $A, B \in \bS^n$. Then for $\mu>0$ large enough, the weak interlacing property holds for
\begin{enumerate}[(i)]
\item $(A+\mu I, B)$, if $B$ is indefinite and the multiplicity of the smallest and the largest eigenvalues of $B$ is $1$.
\item $(A+\beta B, B+\mu A)$ where $\beta\in \R$, if $A+\beta B$ is indefinite and the multiplicity of the smallest and the largest eigenvalues of $A+\beta B$ is $1$.
\item $(A+\mu D, B)$ where $B$ and $D$ are diagonal, if $B\otimes D$ is indefinite and the multiplicity of the smallest and the largest eigenvalues of $B\otimes D$ is $1$.
\item $(A+\mu D_1, B+\mu D_2)$, if 
$D_1\otimes D_2$ is indefinite and the multiplicity of the smallest and the largest eigenvalues of $D_1\otimes D_2$ is $1$.
\end{enumerate}
\end{thm}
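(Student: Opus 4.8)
The plan is to treat all four items by a single asymptotic scheme. By Proposition~\ref{prop:stronginterlace}, weak interlacing for a pair $(X,Y)$ is equivalent to the two scalar inequalities $\lambda_{\max}(X\overset{s}{\otimes}Y)\ge\lambda_{\max}(X\overset{\tilde{s}}{\otimes}Y)$ and $\lambda_{\min}(X\overset{s}{\otimes}Y)\le\lambda_{\min}(X\overset{\tilde{s}}{\otimes}Y)$; equivalently (via the trace form used in the definition of weak interlacing), the extreme values of $\trace(XUYU)$ over unit-norm $U\in\bS^n$ dominate those of $\trace(XWYW^\top)$ over unit-norm $W\in\bK^n$. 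First I would expand $\mathrm{JK}(X,Y):=X\otimes Y+Y\otimes X$ as a polynomial in $\mu$ and isolate its leading term $L$: for (i) this is $\mu\,\mathrm{JK}(I,B)$, for (iii) it is $\mu\,\mathrm{JK}(D,B)$, for (iv) it is $\mu^2\,\mathrm{JK}(D_1,D_2)$, and for (ii), after writing $C:=A+\beta B$, a short computation gives $\mathrm{JK}(A+\beta B,\,B+\mu A)=\mu\,\mathrm{JK}(A,C)+[\mathrm{JK}(A,B)+2\beta\,B\otimes B]$, so the leading term is $\mu\,\mathrm{JK}(A,C)$. Since the extreme eigenvalues of $X\overset{s}{\otimes}Y$ and $X\overset{\tilde{s}}{\otimes}Y$ are governed by $L$ as $\mu\to\infty$, the goal reduces to showing that $L$ separates its even and odd extreme eigenvalues in the correct direction, robustly enough to survive the lower-order remainder.

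For (i), (iii), (iv) the leading matrices are Jordan--Kronecker products of mutually commuting (indeed, after Remark~\ref{rmk:Bdiag}, diagonal) factors, so Lemma~\ref{lem:commuting} yields their spectra explicitly: writing the common eigenvalues as $x_i,y_i$, the even eigenvalues of $L$ are $x_iy_j+x_jy_i$ for $i\le j$ and the odd ones are the same expression restricted to $i<j$. Thus the even and odd spectra of $L$ differ \emph{only} by the diagonal terms $2x_iy_i$, which live solely in the symmetric subspace. The hypotheses --- indefiniteness and simplicity of the extreme eigenvalues of $B$ (resp.\ of $B\otimes D$, $D_1\otimes D_2$) --- are exactly what I would use to force the extreme even eigenvalue of $L$ to be one of these diagonal terms and to lie strictly beyond every odd eigenvalue. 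This is immediate in (i), since $D=I$ makes $x_i+x_j\le 2x_1$ with equality only at a diagonal index; in (iii), (iv) it holds whenever the controlling simple extreme eigenvalue $b_{k}d_{k}$ (resp.\ $d_{1,k}d_{2,k}$) sits at a diagonal position. In that situation the separation grows like $\mu$ (resp.\ $\mu^2$) and dominates the $O(1)$ remainder, so weak interlacing follows for all large $\mu$.

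The main obstacle is twofold. First, in (iii) and (iv) the extreme eigenvalue of the controlling Kronecker product may occur at an \emph{off-diagonal} pair $(i^\ast,j^\ast)$; then the even and odd extreme eigenvalues of $L$ coincide and leading order is a tie. I would resolve this by non-degenerate perturbation theory in $1/\mu$ about the simple leading eigenvectors $\ovec{e_{i^\ast}e_{j^\ast}^\top\pm e_{j^\ast}e_{i^\ast}^\top}$: the first-order splitting is proportional to $A_{i^\ast j^\ast}B_{i^\ast j^\ast}$, which vanishes in (iii) because $B$ is diagonal, and in the second-order term the off-diagonal cross-contributions cancel (again by diagonality), leaving only couplings to the diagonal eigenvectors $\ovec{e_ke_k^\top}$; since these belong exclusively to the symmetric subspace and enter with the sign dictated by $L$, they push the even extreme strictly past the odd one. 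The delicate bookkeeping here is keeping track of the several orders in $\mu$ for (iv), where $B$ is no longer diagonal. Second, and more seriously, case (ii) is genuinely harder: its leading term $\mu\,\mathrm{JK}(A,A+\beta B)$ is a \emph{non-commuting} Jordan--Kronecker product, so Lemma~\ref{lem:commuting} does not apply and the clean diagonal/off-diagonal dichotomy is unavailable. Here I would aim to show directly that the simplicity and indefiniteness of the extreme eigenvalues of $C=A+\beta B$ already force $\mathrm{JK}(A,C)$ to satisfy weak interlacing, the extreme symmetric eigenvector concentrated on the simple extreme eigenvector of $C$ outcompeting any skew (necessarily paired, rank-$\ge 2$) competitor; establishing this reduction is, I expect, the crux of the whole theorem.
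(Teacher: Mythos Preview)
Your asymptotic scheme---expand in $\mu$, show the leading term strictly separates the even and odd extremes, then absorb the bounded remainder---is exactly the paper's strategy. The difference is in execution and in how much is actually proved.

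For part (i) the paper's argument is considerably lighter than yours. Rather than invoking Lemma~\ref{lem:commuting} and computing the full spectrum of $\mathrm{JK}(I,B)$, the paper simply expands $\trace((A+\mu I)UBU)=\trace(AUBU)+\mu\,\trace(UBU)$ and applies Lemma~\ref{lem:eig1}: $\max_{U\in\bS^n,\|U\|_F=1}\trace(UBU)=\lambda_1(B)$, attained at the rank-one matrix $v_1v_1^\top$. Since a nonzero skew-symmetric matrix has rank at least two, simplicity of $\lambda_1(B)$ alone forces the strict inequality $\max_U\trace(UBU)>\max_W\trace(WBW^\top)$ (and likewise at the minimum); the indefiniteness hypothesis is stated but plays no visible role. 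Your route through Lemma~\ref{lem:commuting} is correct but unnecessary here.

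For parts (ii)--(iv) the paper writes only that the proofs ``are along similar lines and are omitted.'' Your worries are therefore not about gaps in \emph{your} argument relative to the paper's---they are about difficulties the paper does not confront. In particular, your observation that in (iii) and (iv) the simple extreme eigenvalue of $B\otimes D$ (resp.\ $D_1\otimes D_2$) can sit at an off-diagonal index, making the leading even and odd extremes of the Jordan--Kronecker product coincide, is correct and is not addressed by the paper. Likewise your point that the leading term in (ii) is $\mu\,\mathrm{JK}(A,A+\beta B)$ with non-commuting factors, so neither Lemma~\ref{lem:eig1} nor Lemma~\ref{lem:commuting} applies directly, is well taken. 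Your proposed perturbation-theoretic resolution of the tie cases is plausible but, as you say, delicate; the paper simply does not engage with it. In short: your overall plan matches the paper, your treatment of (i) is heavier than needed, and for (ii)--(iv) you are being more scrupulous than the paper's own (omitted) proof.
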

\begin{proof}
$(i)$ Let $B$ be an indefinite symmetric matrix and $\mu>0$. 
Since
\begin{align*}
\trace \left((A+\mu I)U BU\right) &= \trace (AUBU) +\mu \trace (UBU)\\
\trace \left((A+\mu I)W BW^\top\right) &= \trace (AWBW^\top) +\mu \trace \left(WBW^\top\right)
\end{align*}
By Lemma~\ref{lem:eig1}, $\lambda_1 (B) =\max_{U\in \bS^n, \norm{U}_F=1} \trace(UBU)$, 
$\lambda_n (B) = \min_{U\in \bS^n, \norm{U}_F=1} \trace(UBU)$. 
If the eigenspaces of the largest and smallest eigenvalues of $B$ both have dimension $1$, then 
\begin{align*}
\max_{U\in \bS^n, \norm{U}_F=1} &\trace(UBU) > \max_{W\in \bK^n, \norm{W}_F=1} \trace\left(WBW^\top\right), \; \text{ and }\\
\min_{U\in \bS^n, \norm{U}_F=1} &\trace(UBU) < \min_{W\in \bK^n, \norm{W}_F=1} \trace\left(WBW^\top\right).
\end{align*}
Then, for $\mu$ large enough
\begin{align*}
\max_{U\in \bS^n, \norm{U}_F=1} &\trace \left(\left(A+\mu I\right)U BU\right)  \geq \max_{W\in \bK^n, \norm{W}_F=1} \trace \left(\left(A+\mu I\right)W BW^\top\right), \; \text{ and }\\
\min_{U\in \bS^n, \norm{U}_F=1} &\trace \left(\left(A+\mu I\right)U BU\right)  \leq \min_{W\in \bK^n, \norm{W}_F=1} \trace \left(\left(A+\mu I\right)W BW^\top\right).
\end{align*}
The proofs for parts $(ii)- (iv)$ are along similar lines and are omitted.
\end{proof}

\subsection{Cases when interlacing properties fail}
In Theorem~\ref{thm:rank2case}, we showed that the interlacing property holds for the pair of matrices $A, B \in \bS^n$ (or $\bK^n$), whenever $$\min\{ \rank(A), \rank(B)\}\leq 2.$$ 
\noindent Given this result, one may ask whether the interlacing property holds for pairs of matrices $A, B$ such that $\min\{ \rank(A), \rank(B)\}= 3$.
We answer this question below in Proposition~\ref{prop:counterex} in a more general setting.
The following example shows that for some $A, B\in \bS^4$ with $\rank(A) = \rank(B)=3$, the weak interlacing property fails. This example is significant in the sense that the validity of the conjecture depends on the rank as well as the dimension of the pair. In particular, it helps to prove that even though the interlacing property holds for every pair of $A, B \in \bS^3$, it may fail when $n\geq 4$ and $\min\{ \rank(A), \rank(B)\}\geq 3$. 
\begin{example}
\label{ex:rank33}
Consider the following pair 
\begin{align*}
A_0:=
\begin{bmatrix}      
  2 & -3  &  2 &  -3\\
 -3 &   0 &   1 &   2\\
  2 &   1 &  -2 &   1\\
 -3 &   2 &   1 &  -4
\end{bmatrix},\;\; B_0:=\Diag(1,    2,    -2,     0).      
\end{align*}
Here, $\rank(A_0) = \rank(B_0)=3$. However, the eigenvector corresponding to the smallest eigenvalue of $A_0\otimes B_0+ B_0\otimes A_0$ is skew-symmetric (see Appendix A for a proof). Therefore, the weak interlacing property fails, and so do the interlacing as well as the strong interlacing properties.
\end{example}

\begin{proposition}
\label{prop:counterex}
For every pair of integers $m\geq k\geq 3$ and for every integer $n$ with ${n\geq \max\{4, k\}}$, there exist $n$-by-$n$ symmetric matrices $A, B$ with $\min\{\rank(A),\rank(B)\}=k$ and 
$\max\{\rank(A),\rank(B)\}= m$ for which the weak interlacing property fails.
\end{proposition}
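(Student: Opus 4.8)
The plan is to bootstrap the single counterexample of Example~\ref{ex:rank33} to all admissible dimensions and ranks by a direct-sum construction together with a continuity argument. Write $A_0, B_0 \in \bS^4$ for the base pair (both of rank $3$). By Proposition~\ref{prop:stronginterlace} the even eigenvalues of $C_0 := A_0\otimes B_0 + B_0\otimes A_0$ are those of $2(A_0\overset{s}{\otimes}B_0)$ and the odd ones those of $2(A_0\overset{\tilde{s}}{\otimes}B_0)$; accordingly set $s_0 := \min_{U\in\bS^4,\,\norm{U}_F=1}\trace(A_0UB_0U)$ and $w_0 := \min_{W\in\bK^4,\,\norm{W}_F=1}\trace(A_0WB_0W^\top)$, so that the smallest even and smallest odd eigenvalues of $C_0$ are $2s_0$ and $2w_0$. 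Example~\ref{ex:rank33} says the overall smallest eigenvalue of $C_0$ is odd and weak interlacing fails, so $w_0 < s_0$. I would also record the strict negativity $w_0 < 0$: the smallest eigenvalue $2w_0$ is bounded above by the average $\tfrac{1}{16}\trace(C_0) = \tfrac{1}{8}\trace(A_0)\trace(B_0) = -\tfrac{7}{8} < 0$. The two facts $w_0 < s_0$ and $w_0 < 0$ are the only properties of the base example the argument uses.

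Next I would construct, for given $3\le k\le m\le n$ with $n\ge 4$ (the bound $m\le n$ being necessary for a rank-$m$ matrix to exist, and after swapping the roles of $A,B$ to put the larger rank on $A$), the $n$-by-$n$ pair
\[
A := (A_0 + \epsilon P_A)\oplus \epsilon D_A, \qquad B := (B_0 + \epsilon P_B)\oplus \epsilon D_B,
\]
where the blocks sit in coordinates $\{1,\dots,4\}$ and $\{5,\dots,n\}$, each $P_\bullet$ is either $0$ or the rank-one projector onto the (one-dimensional) kernel of the corresponding base block, and each $D_\bullet$ is a $0/1$ diagonal $(n-4)$-by-$(n-4)$ matrix. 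For every $\epsilon\neq 0$ the term $\epsilon P_A$ raises the block rank from $3$ to $4$ (the kernel direction becomes an eigenvector with eigenvalue $\epsilon$), and $\epsilon D_A$ contributes its number of ones, so $\rank(A)$ can be made any prescribed value in $\{3,\dots,n\}$; the inequalities $3\le k\le m\le n$ guarantee the required numbers of ones do not exceed $n-4$. I would choose the data so that $\rank A = m$ and $\rank B = k$.

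The heart of the proof is to show that weak interlacing fails for this pair once $\epsilon>0$ is small, and here I would argue by continuity. As $\epsilon\to 0^+$ we have $A\to A_0\oplus 0$ and $B\to B_0\oplus 0$ in $\bS^n$, and by Proposition~\ref{prop:stronginterlace} the even and odd eigenvalues of $C := A\otimes B + B\otimes A$ are the eigenvalues of $2(A\overset{s}{\otimes}B)$ and $2(A\overset{\tilde{s}}{\otimes}B)$, which depend continuously on $(A,B)$. At $\epsilon = 0$ the zero block kills every product involving the new coordinates, so a direct block computation gives that the smallest even eigenvalue of $C$ equals $2\min\{s_0,0\}$ and the smallest odd eigenvalue equals $2\min\{w_0,0\} = 2w_0$. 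Since $w_0 < s_0$ and $w_0 < 0$ we obtain the strict gap $2w_0 < 2\min\{s_0,0\}$. Continuity preserves this strict inequality for all small $\epsilon>0$, so the smallest eigenvalue of $C$ is odd, its eigenvector is skew-symmetric, and hence \eqref{conj1_2:eq1} is violated; thus weak interlacing fails while $\min\{\rank A,\rank B\}=k$ and $\max\{\rank A,\rank B\}=m$.

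I expect the main obstacle to be exactly this continuity step: justifying that the strict separation $2w_0<2\min\{s_0,0\}$ at $\epsilon=0$ is inherited by the perturbed pair. The delicacy is that padding with a zero block introduces many additional zero eigenvalues of both parities, so one must rule out a newly created even eigenvalue slipping below the smallest odd one. The hypothesis $w_0<0$ is precisely what prevents the padded zeros from undercutting the odd minimum (the embedded skew witness remains optimal), and $w_0<s_0$ is the genuine base failure; together they close the gap. A fully rigorous version would invoke continuity of the smallest eigenvalue of $2(A\overset{s}{\otimes}B)$ and of $2(A\overset{\tilde{s}}{\otimes}B)$ as functions of $\epsilon$, together with the fact that the eigenvectors of a Jordan-Kronecker product can always be chosen with definite parity, so the minimizer near $2w_0$ stays skew-symmetric throughout.
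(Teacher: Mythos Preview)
Your proposal is correct and follows essentially the same route as the paper's own proof: bootstrap from the rank-$(3,3)$ base pair $A_0,B_0$ in $\bS^4$, then pad by a direct sum and perturb by small terms to hit any prescribed ranks $k\le m\le n$, invoking continuity of the even/odd spectra to retain the strict inequality that witnesses failure of weak interlacing. Your treatment is in fact more careful than the paper's sketch (the paper perturbs by $\varepsilon I$ and partial identities without spelling out the continuity step), and your observation $w_0<0$ is exactly the ingredient that keeps the embedded skew witness optimal after zero-padding.
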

\begin{proof}
If for some unit-norm matrix ${W}_0\in\bK^n$, i.e., $\norm{W_0}_F=1$,  $$\trace (AW_0BW_0^\top) < \trace (AUBU) \hspace{0.6cm} (\text{or } \trace (AW_0BW_0^\top)> \trace (AUBU))  $$ 
holds for every symmetric $n$-by-$n$ unit-norm matrix $U$, then the smallest (or the largest) eigenvalue of $(A\otimes B+B\otimes A)$ cannot be a symmetric matrix. This shows that the weak interlacing property fails for $A, B$.

Using the pair $A_0, B_0$ from Example~\ref{ex:rank33} where $k = m = 3$, $n = 4$, and
\begin{equation*}
\small
W_0:= 
\begin{bmatrix} 
 0 &  2 & -4 &  0\\
 -2  &  0 &   4 &  -3\\
 4 &  -4 &  0 &  -2\\
 0 &   3 &   2 &   0
\end{bmatrix},
\end{equation*}
one can show that the minimum eigenvalue of $A_0\otimes B_0 + B_0\otimes A_0$ corresponds to a skew-symmetric eigenvector. Hence this pair provides an instance where the weak interlacing property fails for $k = 3 , m = 3, n= 4$, i.e., $\rank(A_0)=\rank(B_0)= 3$ and $A_0, B_0 \in \bS^4$.

By choosing $A_1:=A_0$, $B_1:=B_0+\varepsilon I$, where $\varepsilon>0$ is small enough so that $\rank(B) = 4$, one can construct an example for which $k=3, m=4, n=4$ and  the weak interlacing property fails. 
To show that the weak interlacing property fails for $k=4, m =4, n=4$, choose $A_1:=A_0+\varepsilon I$, $B_1:=B_0+\varepsilon I$, where $\varepsilon>0$ is small enough so that $\rank(A_1) = \rank(B_1) = 4$. For $k=4, m \geq 4$ and for $n>4$, it suffices to choose 
\[A_2 := \begin{bmatrix} A_1 & 0\\ 0 & \varepsilon I_{d}\end{bmatrix} \in \bS^n \text{ and } B_2 := \begin{bmatrix} B_1 & 0\\ 0 & 0\end{bmatrix} \in \bS^n,\] 
where $\varepsilon >0$ a small number and $I_d$ is a (partial) identity matrix with the first $d \in\{0,1,\ldots, m-k\}$ entries of its diagonal equal to $1$ and the rest zero.
The proof for arbitrary $k$ follows along similar lines with the case $n=k=4$ and is omitted here. For a more detailed proof please refer to \cite{thesis:nkalantarova}.
\end{proof}

For nonnegative symmetric matrices $A, B\in\bS^n$, \eqref{conj1:eq2} holds by the Perron-Frobenius theorem, i.e., for nonnegative symmetric matrices the largest eigenvalue corresponds to a symmetric eigenvector; however, we constructed examples of such nonnegative symmetric matrices where \eqref{conj1:eq1} fails. In addition, we constructed examples where for a pair of full rank $6$-by-$6$ real skew-symmetric matrices the weak interlacing property fails. See Appendix A, Example~\ref{counter:nonneg_skew}.

Using a similar construction given as in Theorem~\ref{thm:perturbs}, we generate infinitely many pairs of matrix pencils for which  the weak interlacing property fails.
\begin{thm}
\label{thm:perturbs2}
If $\bar{A}, \bar{B} \in \bS^n$ where $n\geq 4$ and the weak interlacing property fails, then for every $A, B \in \bS^n$, the weak interlacing fails for the following pairs:
\begin{enumerate}
\item $(A+\mu \bar{A}, \bar{B})$ and $(A+\beta\mu \bar{A}, B+\mu \bar{B})$ for $\mu>0$ large enough and $\beta>0$, 
\item $(A+\beta \bar{B}, B+\alpha \bar{A})$ for $\alpha, \beta >0$ large enough. 
\end{enumerate}
\end{thm}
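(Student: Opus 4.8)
The plan is to reduce the failure of weak interlacing for each perturbed pair to the failure already assumed for $(\bar{A},\bar{B})$, by exploiting the fact that weak interlacing is governed entirely by the extreme values of the two quadratic forms $U\mapsto\trace(\bar{A}U\bar{B}U)$ over the unit sphere of $\bS^n$ and $W\mapsto\trace(\bar{A}W\bar{B}W^\top)$ over the unit sphere of $\bK^n$, and by showing that in every perturbed pair the leading term (in the perturbation parameter) is a positive multiple of exactly these two forms. First I would record the hypothesis: since $(\bar{A},\bar{B})$ fails weak interlacing, one of the equivalent inequalities \eqref{conj1_2:eq1}--\eqref{conj1_2:eq2} is strictly violated. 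Without loss of generality assume the minimum inequality fails, so that
\[
\delta := \min_{U\in\bS^n,\,\norm{U}_F=1}\trace(\bar{A}U\bar{B}U) - \min_{W\in\bK^n,\,\norm{W}_F=1}\trace(\bar{A}W\bar{B}W^\top) > 0;
\]
the case where the maximum inequality fails is handled by reversing all inequalities below.

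Next, for each pair I would expand $\trace(CUDU)$ and its skew analogue $\trace(CWDW^\top)$ as polynomials in the perturbation parameter(s) and isolate the leading coefficient, noting that the arbitrary $A,B$ only ever enter lower-order terms. For $(A+\mu\bar{A},\bar{B})$ the coefficient of $\mu$ is $\trace(\bar{A}U\bar{B}U)$; for $(A+\beta\mu\bar{A},B+\mu\bar{B})$ the coefficient of $\mu^2$ is $\beta\,\trace(\bar{A}U\bar{B}U)$ with $\beta>0$; and for $(A+\beta\bar{B},B+\alpha\bar{A})$ the coefficient of $\alpha\beta$ is $\trace(\bar{B}U\bar{A}U)$, which equals $\trace(\bar{A}U\bar{B}U)$ by cyclicity of the trace. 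The identical computation on the skew side yields leading coefficient $\trace(\bar{B}W\bar{A}W^\top)=\trace(\bar{A}W\bar{B}W^\top)$, where for skew $W$ the identity follows from $W^\top=-W$ together with cyclicity. Thus in all three pairs the top-order term is a positive multiple of the $(\bar{A},\bar{B})$-forms, separately on the symmetric and on the skew side.

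Finally I would run a separation-of-scales argument. Writing $\rho$ for the leading coefficient ($\rho=\mu$, $\beta\mu^2$, and $\alpha\beta$ respectively) and bounding every lower-order coefficient function uniformly by a constant $M$ over the compact unit spheres, I obtain
\[
\min_U\trace(CUDU) - \min_W\trace(CWDW^\top) \;\geq\; \rho\,\delta - R,
\]
where $R$ collects the lower-order-in-parameter remainder. Since $\delta>0$ and $\rho$ grows strictly faster than $R$ as the parameter(s) tend to infinity, for the parameter(s) large enough the minimum over symmetric matrices strictly exceeds the minimum over skew matrices, which is precisely the failure of \eqref{conj1_2:eq1} for the perturbed pair; the max-failure case is symmetric.

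The main obstacle is the bookkeeping in part $2$, where the two parameters $\alpha,\beta$ tend to infinity simultaneously: the decisive cross term scales like $\alpha\beta$ while the interfering terms scale only like $\alpha$ and $\beta$, so I must verify that $\alpha\beta\,\delta$ dominates the $O(\alpha+\beta)$ remainder uniformly for all $\alpha,\beta$ above a common threshold $T$, for instance using $\alpha\beta\geq\tfrac{T}{2}(\alpha+\beta)$ whenever $\alpha,\beta\geq T$. The only other point requiring care is confirming, via the trace identities above, that the leading coefficients coincide with the genuine $(\bar{A},\bar{B})$-forms rather than with a transposed variant; everything else is routine estimation on the compact unit spheres.
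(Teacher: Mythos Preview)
Your proposal is correct and follows precisely the approach the paper intends: the paper omits the proof of this theorem, stating only that it is elementary and parallel to that of Theorem~\ref{thm:perturbs}, which is exactly the leading-term perturbation argument you carry out. Your handling of the two-parameter case in part~2 via $\alpha\beta\geq\tfrac{T}{2}(\alpha+\beta)$ and your verification that the cross-term on the skew side satisfies $\trace(\bar{B}W\bar{A}W^\top)=\trace(\bar{A}W\bar{B}W^\top)$ are the only points requiring care, and both are correct.
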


\noindent Proof of Theorem~\ref{thm:perturbs2} is elementary (similar to the proof of Theorem~\ref{thm:perturbs}) and is omitted.

\section{Conclusion}
We proved that for Jordan-Kronecker products of $n$-by-$n$ symmetric matrices the odd eigenvalues interlace the even eigenvalues provided one of the matrices has rank at most two. Under the same condition, this property holds for Jordan-Kronecker products of skew-symmetric matrices as well. Moreover, we showed that the weak interlacing and the interlacing properties hold for every pair of  $n$-by-$n$ symmetric (and skew-symmetric) matrices, whenever $n\in\{2,3\}$. In many applications of the Kronecker product, one is interested in constraints where one of the terms in the Kronecker product has rank one or two. Our positive results may be helpful in such applications.
Since we also proved that the weak and hence the strong interlacing properties generally fail, this intensifies the motivation for finding specially structured matrices for which these interlacing properties hold.

In the introduction, we exposed some nice features of eigenspaces of Jordan-Kronecker products of pairs of symmetric and skew-symmetric matrices. One may also wonder if similar characterizations can be established for matrices of the form $A\otimes B - B\otimes A$. For $A, B \in \R^{n\times n}$, we define $A\otimes B - A\otimes B$ as the \emph{Lie-Kronecker product} of $A$ and $B$. 
Note that for every pair of symmetric matrices $A, B$ (or skew-symmetric matrices $A, B$), $A\otimes B$ is symmetric.

The following proposition characterizes the eigenvector/eigenvalue structure of the Lie-Kronecker product of symmetric and skew-symmetric matrices.
\begin{proposition}
Let $A, B\in \bS^{n}$ (or $\bK^n$). Then the following hold.
\begin{enumerate}
\item If $\lambda\not = 0$ is an eigenvalue of $A\otimes B - B\otimes A$ corresponding to the eigenvector $v$, then
$-\lambda$ is also an eigenvalue corresponding to the eigenvector $Tv$.
\item The eigenvectors of $A\otimes B - B\otimes A$ can be chosen in the following form 
\[\{v_1, v_2, \ldots, v_t, Tv_1, Tv_2, \ldots, Tv_t, u_1,u_2, \ldots, u_n\},\]
where $t := n(n-1)/2$ and $Tu_i = u_i$ for every $i\in\{1,2,\ldots, n\}$.
Furthermore, the symmetric eigenvectors $\{u_1, u_2, \ldots, u_n\}$ belong to the null-space of $A\otimes B - B\otimes A$.
\end{enumerate}
\end{proposition}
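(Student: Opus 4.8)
The plan is to reduce both statements to a single structural fact: the commutation matrix $T$ \emph{anti}commutes with $M := A\otimes B - B\otimes A$. Since $T(A\otimes B)T = B\otimes A$ and $T^2=\Id$, I would first record $TMT = (B\otimes A)-(A\otimes B) = -M$, i.e.\ $TM=-MT$; moreover $A\otimes B$ and $B\otimes A$ are symmetric for $A,B\in\bS^n$ (and for $A,B\in\bK^n$ as well), so $M$ is symmetric and has a real orthonormal eigenbasis. Part~(1) is then a one-line consequence: from $Mv=\lambda v$ we get $M(Tv)=-TMv=-\lambda\,(Tv)$, and since $T$ is invertible and $\lambda\neq 0$ makes $-\lambda\neq\lambda$, the vector $Tv$ is a genuine eigenvector for the distinct eigenvalue $-\lambda$.

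For Part~(2), I would next examine how $M$ interacts with the orthogonal splitting $\R^{n^2}=\mathrm{Sym}\oplus\mathrm{Skew}$ into the $(\pm1)$-eigenspaces of $T$, of dimensions $\symn{n}=:s$ and $\skewn{n}=:t$. The relation $TM=-MT$ shows that $M$ carries symmetric vectors to skew-symmetric ones and vice versa, so in this basis $M=\left[\begin{smallmatrix}0&N\\ N^\top&0\end{smallmatrix}\right]$ for some $N\in\R^{s\times t}$. This yields the clean observation that every purely symmetric or purely skew-symmetric eigenvector lies in $\ker M$: if $u$ is symmetric with $Mu=\lambda u$, then $\lambda u=Mu$ is simultaneously symmetric and skew-symmetric, forcing $\lambda=0$ (and identically for skew-symmetric eigenvectors). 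Hence every eigenvector with a nonzero eigenvalue is ``mixed,'' and by Part~(1) such eigenvectors come in $T$-paired couples $\{v_i,Tv_i\}$ associated with eigenvalues $\pm\sigma_i$, where the $\sigma_i$ are the nonzero singular values of $N$.

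The heart of the argument is then a dimension count inside $\ker M$. With $r:=\rank N$, the symmetric part of the null space is $\ker N^\top$ of dimension $s-r$ and the skew-symmetric part is $\ker N$ of dimension $t-r$, so their difference is exactly $s-t=n$, independent of $r$. I would choose a basis $s_1,\dots,s_{t-r}$ of the skew-symmetric null space together with linearly independent symmetric null vectors $p_1,\dots,p_{t-r}$, set $v_{r+j}:=p_j+s_j$ (so that $Tv_{r+j}=p_j-s_j$, both null eigenvectors), and take $u_1,\dots,u_n$ to be a basis of the remaining $n$-dimensional symmetric null space. Combined with the $r$ nonzero pairs $\{v_i,Tv_i\}_{i\le r}$, this produces precisely $t$ vectors $v_i$, their images $Tv_i$, and $n$ symmetric null vectors $u_i$, totalling $2t+n=n^2$; a short independence check (using orthogonality across distinct eigenvalues and the basis property inside $\ker M$) shows they form the desired eigenbasis.

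I expect the only real obstacle to be the degenerate regime $r<t$, where $\ker M$ contains a nontrivial skew-symmetric part that cannot simply be left as isolated eigenvectors but must be absorbed into $T$-pairs to match the prescribed ``$t$ pairs plus $n$ symmetric'' template; the construction $v_{r+j}=p_j+s_j$ is exactly what accomplishes this. The two facts carrying the weight are that $\dim(\ker N^\top)-\dim(\ker N)=s-t=n$ regardless of $r$, and that the assembled collection spans $\ker M$; everything else follows directly from the anticommutation $TM=-MT$.
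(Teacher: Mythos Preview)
The paper states this proposition in its concluding section but supplies no proof, so there is no argument on record to compare against. Your approach is correct. The anticommutation relation $TM=-MT$ (from $T(A\otimes B)T=B\otimes A$ and $T^2=\Id$) is precisely the right structural fact: it yields Part~(1) in one line and, together with the symmetry of $M$, forces every symmetric or skew-symmetric eigenvector into $\ker M$. For Part~(2), writing $M=\left[\begin{smallmatrix}0&N\\N^\top&0\end{smallmatrix}\right]$ with $N\in\R^{s\times t}$ and invoking $\dim\ker N^\top-\dim\ker N=s-t=n$ is clean and independent of $r:=\rank N$; your device $v_{r+j}:=p_j+s_j$ to absorb any skew-symmetric null vectors into $T$-pairs, leaving exactly $n$ unpaired symmetric null vectors, correctly handles the degenerate case $r<t$. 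The linear-independence check is routine since $\operatorname{span}\{p_j+s_j,\,p_j-s_j\}=\operatorname{span}\{p_j,s_j\}$ and, by construction, $\{p_1,\dots,p_{t-r},u_1,\dots,u_n\}$ is a basis of $\ker N^\top$ while $\{s_1,\dots,s_{t-r}\}$ is a basis of $\ker N$.
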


\section{Acknowledgements}
This work is based on research supported in part by Discovery Grants from NSERC and by U.S. Office of Naval Research under award numbers N00014-15-1-2171 and N00014-18-1-2078. Part of this work was done while the second author was visiting Simons Institute for the Theory of Computing, supported in part by the DIMACS/Simons Collaboration on Bridging Continuous and Discrete Optimization through NSF grant \#CCF-1740425.

\section{References}
\bibliography{article_u1}
\newpage
\appendix
\section{}
\begin{proposition}
Consider the following $4$-by-$4$ symmetric matrices:
\begin{equation}
\label{counterex:0}
A_0 :=
\begin{bmatrix}      
  2 & -3  &  2 &  -3\\
 -3 &   0 &   1 &   2\\
  2 &   1 &  -2 &   1\\
 -3 &   2 &   1 &  -4
\end{bmatrix},\;\;     
 \text{ and }
B_0 := \begin{bmatrix}
1 & 0 & 0 & 0\\
0 & 2 & 0 & 0\\
0 & 0 & -2 & 0\\
0 & 0 & 0 & 0   
\end{bmatrix}.
\end{equation}
The smallest eigenvalue of $A_0 \otimes B_0 + B_0 \otimes A_0$ corresponds to a skew symmetric eigenvector. 
\end{proposition}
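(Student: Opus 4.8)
The plan is to reduce the statement to a strict comparison between the smallest \emph{even} and smallest \emph{odd} eigenvalue of $C := A_0 \otimes B_0 + B_0 \otimes A_0$, and then to certify that comparison on the explicit data. By Proposition~\ref{prop:stronginterlace}, the spectrum of $C$ is the disjoint union of the even spectrum, namely the eigenvalues of $2(A_0 \overset{s}{\otimes} B_0)$ attached to symmetric eigenvectors, and the odd spectrum, namely the eigenvalues of $2(A_0 \overset{\tilde{s}}{\otimes} B_0)$ attached to skew-symmetric eigenvectors. Hence the eigenvector belonging to $\lambda_{\min}(C)$ is skew-symmetric as soon as
\[
\lambda_{\min}\bigl(A_0 \overset{\tilde{s}}{\otimes} B_0\bigr) < \lambda_{\min}\bigl(A_0 \overset{s}{\otimes} B_0\bigr),
\]
where the \emph{strict} inequality is what guarantees the global minimum is attained only in the odd block. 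Using the trace formulation of these minima (as in \eqref{conj1_2:eq1}--\eqref{conj1_2:eq2}), the two sides equal, up to the common factor $2$, the quantities $\min_{W\in\bK^4,\ \|W\|_F=1}\trace(A_0 W B_0 W^\top)$ and $\min_{U\in\bS^4,\ \|U\|_F=1}\trace(A_0 U B_0 U)$; since the factor is shared, it is enough to compare these two trace minima.

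For the odd side I would simply evaluate the quotient at the explicit skew-symmetric matrix $W_0$ already exhibited in the proof of Proposition~\ref{prop:counterex} for this very pair: after normalization, $\trace(A_0 W_0 B_0 W_0^\top)/\|W_0\|_F^2$ furnishes a concrete negative upper bound $m_o$ on the odd minimum. On the even side, note first that $U = e_4 e_4^\top$ gives $\trace(A_0 U B_0 U) = (A_0)_{44}(B_0)_{44} = 0$ because $B_0 e_4 = 0$, so the even minimum is at most $0$; the real content is to produce a \emph{lower} bound $m_e$ on $\min_U \trace(A_0 U B_0 U)$ with $m_o < m_e$. Since $\symn{4}=10$ and $\skewn{4}=6$, the two reduced matrices $2(A_0 \overset{s}{\otimes} B_0)$ and $2(A_0 \overset{\tilde{s}}{\otimes} B_0)$ are small ($10\times 10$ and $6\times 6$), so the most direct route is to form them and compute their least eigenvalues outright.

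The main obstacle is the rigorous separation of the two minima: the odd upper bound is immediate once $W_0$ is plugged in, but certifying the even lower bound requires controlling the smallest eigenvalue of the $10\times 10$ symmetric-Kronecker matrix $2(A_0 \overset{s}{\otimes} B_0)$. Because $B_0$ is indefinite with a one-dimensional kernel, no positivity shortcut is available, so I expect to either (a) write down the characteristic polynomial of $2(A_0 \overset{s}{\otimes} B_0)$ and localize its least root by a sign/interval argument, or (b) exhibit an explicit certificate of the form $\trace(A_0 U B_0 U) \ge m_e\,\|U\|_F^2$ valid on all of $\bS^4$, with $m_e > m_o$. Once $m_o < m_e$ is in hand, Proposition~\ref{prop:stronginterlace} delivers the conclusion; as a final confirmation I would matricize the minimizing eigenvector $v$ of $C$ and check directly that $Tv = -v$, i.e.\ that it is skew-symmetric.
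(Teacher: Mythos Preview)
Your plan is correct and matches the paper's own argument: the paper evaluates the Rayleigh quotient at the same $W_0$ to get the odd upper bound $m_o=-19046/2004<-19/2$, and then carries out exactly your option~(b) with $m_e=-19/2$, i.e.\ it shows $u^\top(A_0\otimes B_0)u+\tfrac{19}{2}u^\top u>0$ for all symmetric $u$ by writing this quadratic form as $\tfrac12 x^\top T x$ in the ten distinct entries of $U$ and proving $T\succ 0$ via diagonal dominance plus two nested Schur complements.
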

\begin{proof}
Here, $\rank(A_0)=\rank(B_0)=3$. For the following skew-symmetric matrix 
\[{W}_0:= 
\begin{bmatrix} 
 0 &  2 & -4 &  0\\
 -2  &  0 &   4 &  -3\\
 4 &  -4 & 0 &  -2\\
 0 &   3 &   2 &   0
\end{bmatrix},\] 
 define ${\kw}_0:=\ovec{W_0}$. Then
\[
\rho_{\kw_0}(A_0, B_0):=\dfrac{{\kw_0}^\top (A_0 \otimes B_0){\kw_0}}{{\kw_0}^\top {\kw_0}} = \dfrac{-1472}{98} < -15.
\] 
If we show that for every symmetric vector $\ku$,
 \[ \Delta_{\ku} := \ku^\top (A_0 \otimes B_0) \ku +15 \ku^\top \ku >0,\] 
then that will imply the minimum eigenvalue of $A_0\otimes B_0 + B_0\otimes A_0$ corresponds to a skew-symmetric vector.
Note that we can get a lower dimensional quadratic representation of $\Delta_{\ku}$ by gathering the 
terms for each distinct entry $u_{ij}$.
\begin{align*}
\Delta_{\ku} &=  \ku^\top (A_0 \otimes B_0) \ku +15 \ku^\top \ku\\
 &= \begin{bmatrix} u_{11}\\  u_{21}\\ u_{31}\\ u_{41}\\ u_{22}\\ u_{32}\\ u_{42}\\ u_{33} \\u_{43}\\ u_{44} \end{bmatrix} ^\top
\begin{bmatrix}
  17 & -3 & 2 & -3 & 0 & 0 & 0 & 0 & 0 & 0\\
-3 & 34 & 1 & 2 & -6 & 4 & -6 & 0 & 0 & 0\\
2 & 1 & 24 & 1 & 0 & 6 & 0 & -4 & 6 & 0\\
-3 & 2 & 1 & 26 & 0 & 0 & 0 & 0 & 0 & 0\\
0 & -6 & 0 & 0 & 15 & 2 & 4 & 0 & 0 & 0\\
0 & 4 & 6 & 0 & 2 & 26 & 2 & -2 & -4 & 0\\
0 & -6 & 0 & 0 & 4 & 2 & 22 & 0 & 0 & 0\\
0 & 0 & -4 & 0 & 0 & -2 & 0 & 19 & -2 & 0\\
0 & 0 & 6 & 0 & 0 & -4 & 0 & -2 & 38 & 0\\
0 & 0 & 0 & 0 & 0 & 0 & 0 & 0 & 0 & 15  
\end{bmatrix} 
\begin{bmatrix} u_{11}\\  u_{21}\\ u_{31}\\ u_{41}\\ u_{22}\\ u_{32}\\ u_{42}\\ u_{33}\\ u_{43}\\ u_{44} \end{bmatrix}.
\end{align*}

Since the last row and column of the above matrix are all zeros except its diagonal element which is positive, it suffices to show that the leading $9$-by-$9$ matrix, which we denote by $T$, is positive definite.
Furthermore, $5$-by-$5$ leading principal matrix of $T$ given below
\[X :=\begin{bmatrix}
  17 & -3 & 2 & -3 & 0 \\
-3 & 34 & 1 & 2 & -6  \\
2 & 1 & 24 & 1 & 0 \\
-3 & 2 & 1 & 26 & 0 \\
 0 & -6 & 0 & 0 & 15
\end{bmatrix}
\]
is positive definite, since it is a symmetric strictly diagonally-dominant matrix.
In order to show that $T$ is positive definite, we use Schur Complement Lemma (see, for instance, \cite{book:Tuncel}). Let 
$Z:=T(6:9,6:9)$ and $Y:=T(1:5,6:9)$ (in \textsf{MATLAB} notation). 
We compute $
U:=Z-Y^\top X^{-1} Y$ and show $U\succ 0$.
\[
U = 
1189419 \begin{bmatrix}
28029302 & 2502346 & -1211070 & -6509328\\
2502346 & 24153701 & 36432 & -54648\\
-1211070 & 36432 & 21793953 & -1171326\\
-6509328 & -54648 & -1171326 & 43386654
\end{bmatrix}
\]
$U$ is positive definite, since each of its diagonal elements is positive, it is symmetric and strictly diagonally-dominant.
This concludes the proof.
\end{proof}
\begin{example}
\label{counter:nonneg_skew}
For the following pair of $4$-by-$4$ nonnegative symmetric matrices and $6$-by-$6$ skew-symmetric matrices, the weak interlacing property fails: 
 \begin{equation*}
 A_+ := 
\begin{bmatrix}
	98 &  48 &  88 &  31\\
    48 &  33 &  91 & 116\\
    88 &  91 &  91 &  45\\
    31 & 116 &  45 & 139
\end{bmatrix} \text{, }  
B_+: = 
\begin{bmatrix} 35 &  23 &  78 & 125\\
        23 & 100 &  91 & 152\\
        78 &  91 &   1 & 120\\
       125 & 152 & 120 & 187     
\end{bmatrix}.
\end{equation*} 
\begin{equation*}
\tilde{A}:=
\begin{bmatrix}
0 & -20 &   -9  &   4  & -14  &  -13\\
 20  &    0  &  33  &  -18  &   0  & -20\\
   9  & -33  &   0  &   2  &  -7  &  -8\\
  -4  &  18  &  -2  &   0  & -13  & -10\\
  14  &   0  &   7  &  13  &   0  & -22\\
  13  &  20  &   8  &  10  &  22  &   0
\end{bmatrix} \text{, }
\end{equation*}
\begin{equation*}
\tilde{B}:= 
\begin{bmatrix}
	 0  & -8  &  2 & -13 & -20 & 28\\
  8   &    0   &   0   & -17   &  16   &  28\\
  -2   &   0   &   0   &   9   &   6   &  -37\\
  13   &  17   &  -9   &   0   &  14   &  -4\\
  20   & -16   &  -6   & -14   &   0   &  -4\\
 -28   &  -28   &   37   &   4   &   4   &   0
\end{bmatrix}.
\end{equation*}
For each pair above, the eigenvector corresponding to the smallest eigenvalue of the Jordan-Kronecker product (with multiplicity one) is a skew-symmetric vector.
\end{example}

\end{document}